\documentclass[11pt]{amsart}
\usepackage{amsmath, enumerate}
\usepackage{amsfonts, amssymb, amsthm}
\usepackage{mathtools}
\usepackage{geometry}
\usepackage{a4wide}

\numberwithin{equation}{section}
\theoremstyle{plain}

\newtheorem{theorem}{Theorem}[section]
\newtheorem{lemma}[theorem]{Lemma}
\newtheorem{proposition}[theorem]{Proposition}
\newtheorem{corollary}[theorem]{Corollary}

\theoremstyle{definition}
\newtheorem{definition}[theorem]{Definition}

\newtheorem{example}[theorem]{Example}

\theoremstyle{remark}
\newtheorem{remark}[theorem]{Remark}

%For diagrams
\usepackage{pst-node}
\usepackage{auto-pst-pdf}
\usepackage{tikz-cd}

\newcommand{\R}{\mathbb{R}}
\newcommand{\N}{\mathbb{N}}
\newcommand{\Z}{\mathbb{Z}}
\newcommand{\Q}{\mathbb{Q}}
\newcommand{\C}{\mathbb{C}}

\renewcommand{\H}{\mathbb{H}}

\newcommand{\PSL}{{\mathrm{PSL}}}

\newcommand{\tr}{\mathrm{tr}}

\renewcommand{\Re}{\mathrm{Re}}

\begin{document}
\title{Twisted traces of modular functions on hyperbolic $3$-space}

\author{S.~Herrero}
\address{Universidad de Santiago de Chile, Dept.~de Matem\'atica y Ciencia de la Computaci\'on, Av.~Libertador Bernardo O'Higgins 3363, Santiago, Chile, and ETH, Mathematics Dept., CH-8092, Z\"urich, Switzerland}
\email{sebastian.herrero.m@gmail.com}
\thanks{S.~Herrero's research is supported by ANID/CONICYT FONDECYT Iniciaci\'on grant 11220567 and by SNF grant CRSK-2{\_}220746}

\author{\"O.~Imamo\=glu}
\address{ETH, Mathematics Dept., CH-8092, Z\"urich, Switzerland}
\email{ozlem@math.ethz.ch}
\thanks{\"O.~Imamo\=glu's research  is supported by SNF grant 200021-185014}

\author{A.-M.~von Pippich}
\address{
University of Konstanz, Dept. of Mathematics and Statistics, Universit\"atsstra{\ss}e 10, 78464 Konstanz, Germany}
\email{anna.pippich@uni-konstanz.de}
\thanks{A.-M.~von Pippich's research is supported by the LOEWE research unit \emph{Uniformized Structures in Arithmetic and Geometry}}

\author{M.~Schwagenscheidt}
\address{ETH, Mathematics Dept., CH-8092, Z\"urich, Switzerland}
\email{mschwagen@ethz.ch}
\thanks{M.~Schwagenscheidt's research is supported by SNF grant PZ00P2{\_}202210}

\begin{abstract}
  	We compute analogues of twisted traces of CM values of harmonic modular functions on hyperbolic $3$-space and show that they are essentially given by Fourier coefficients of the $j$-invariant. From this we deduce that the twisted traces of these harmonic modular functions are integers. Additionally, we compute the twisted traces of Eisenstein series on hyperbolic $3$-space in terms of Dirichlet $L$-functions and divisor sums.
\end{abstract}

\subjclass[2020]{11F55, 11F37 (primary), 11F30 (secondary)}

%11F55 Other groups and their modular and automorphic forms (several variables)
%11F37 Forms of half-integer weight; nonholomorphic modular forms
%11F30 Fourier coefficients of automorphic forms
 
\maketitle

\section{Introduction and statement of results}

The values of the modular $j$-invariant at imaginary quadratic irrationalities are called \emph{singular moduli}. By the theory of complex multiplication, the singular moduli are algebraic integers, and their traces are rational integers. A famous result of Zagier \cite{zagiertraces} states that the generating function of traces of singular moduli is a weakly holomorphic modular form of weight $3/2$. This result has been extended in various directions, for example to traces of (weakly holomorphic and non-holomorphic) modular forms on congruence subgroups by Bruinier and Funke \cite{bruinierfunke}. 
%In view of the fundamental work of Kudla and Millson \cite{kudlamillson} on the modularity of intersection numbers of cycles on locally symmetric spaces, one might hope that some analogue of Zagier's modularity result holds for automorphic forms on locally symmetric spaces. 
Mizuno \cite{mizuno} proved an analogue of Zagier's result for automorphic forms on hyperbolic $3$-space $\H^3$. He showed that the ``traces'' of Niebur Poincar\'e series and Eisenstein series on $\H^3$ appear in the Fourier coefficients of certain linear combinations of non-holomorphic elliptic Poincar\'e series and Eisenstein series, respectively, of odd weight. Kumar \cite{kumar} gave ``twisted'' versions of Mizuno's results. Similar results in the untwisted case were obtained by Matthes \cite{matthes} on higher dimensional hyperbolic spaces. 
%These results were generalized to Niebur Poincar\'e series on hyperbolic $n$-space by Matthes \cite{matthes}.

%Kumar \cite{kumar} gave twisted versions of Mizuno's results, but did not consider the relation to the $j$-invariant or the algebraic nature of the twisted traces of $J_\nu$.

In the present paper we restrict our attention to the special case of harmonic automorphic forms on $\H^3$ and their ``twisted traces''. The automorphic forms we consider here are constructed as special values of Niebur Poincar\'e series, % and define harmonic automorphic functions on $\H^3$. 
which can be viewed as analogues on $\H^3$ of the modular $j$-invariant. %As it turns out, %the generating function of twisted traces of these harmonic automorphic forms on $\H^3$ yields a weakly holomorphic modular form of weight $2$ for $\SL_2(\Z)$. 
We show that the twisted traces of these harmonic automorphic forms on $\H^3$ are Fourier coefficients of weakly holomorphic modular functions for $\PSL_2(\Z)$ (by ``Zagier duality'', they are also Fourier coefficients of weight 2 weakly holomorphic modular forms).
%In particular, 
This allows us to deduce that the twisted traces are integers, which is not a priori clear since the theory of complex multiplication is not available in our setting. Using the same method we can also compute the twisted traces of weight 0 real-analytic Eisenstein series on $\H^3$.

In the following we describe our results in more detail. Let $\Q(\sqrt{D})$ be an imaginary quadratic field of discriminant $D < 0$, and let $\mathcal{O}_D$ be its ring of integers. The group 
\[
\Gamma = \PSL_2(\mathcal{O}_D)
\]
acts on the hyperbolic $3$-space
\[
\H^3 = \{P = z + r j \, : \, z \in \C, r \in \R_{> 0}\}
\] 
(viewed as a subset of $\R[i,j,k]$ of Hamilton's quaternions) by fractional linear transformations. Let $\mathfrak{d}_D^{-1}$ denote the inverse different of $\Q(\sqrt{D})$. For $\nu \in \mathfrak{d}_D^{-1}$ with $\nu \neq 0$  and $s\in \C$ with $\Re(s) > 1$ we consider the Niebur Poincar\'e series
\[
F_{\nu}(P,s) = 2\pi |\nu|\sum_{\gamma \in \Gamma_\infty \backslash \Gamma}r(\gamma P)I_s(4\pi |\nu|r(\gamma P))e(\tr(\nu z(\gamma P))),
\]
with the usual $I$-Bessel function. Here $\Gamma_\infty$ denotes the subgroup of $\Gamma$ consisting of the matrices $\pm\left(\begin{smallmatrix}1 & \beta \\ 0 & 1 \end{smallmatrix}\right)$ with $\beta \in \mathcal{O}_D$, and we write $\gamma P = z(\gamma P) + r(\gamma P)j$. The Niebur Poincar\'e series converges for $\Re(s) > 1$ and satisfies the Laplace equation
\begin{align}\label{eq laplace}
(\Delta - (1-s^2))F_{\nu}(P,s) = 0,
\end{align}
where $\Delta$ denotes the usual hyperbolic Laplace operator on $\H^3$. By analyzing the Fourier expansion, it was shown in \cite[Proposition~4.4]{herreroimamogluvonpippichtoth} that $F_{\nu}(P,s)$ has an analytic continuation to $\Re(s) > 1/2$ which is holomorphic at $s = 1$. 

\begin{definition}
For $\nu \in \mathfrak{d}_D^{-1}$ with $\nu \neq 0$ we define the function
\[
J_{\nu}(P) = F_{\nu}(P,1),
\]
which is a harmonic $\Gamma$-invariant function on $\H^3$.
\end{definition}

We may view the functions $J_{\nu}(P)$ as hyperbolic $3$-space analogues of the elliptic modular functions $j_n(\tau)$ whose Fourier expansions are of the form $j_n(\tau)= q^{-n} + O(q)$. Indeed, $j_n(\tau)$ can be constructed as a Niebur Poincar\'e series in an analogous way, see Section~\ref{section niebur}. This inspired the notation $J_{\nu}(P)$.

We are interested in the values of $J_{\nu}(P)$ at special points in $\H^3$. For a positive integer $m > 0$ we let
\[
L_{|D|m}^+ = \left\{X = \begin{pmatrix}a & b \\ \overline{b} & c \end{pmatrix} \, :\, a,c \in \N,\, b \in \mathcal{O}_D, \, \det(X)=|D|m \right\}
\]
be the set of positive definite integral binary hermitian forms of determinant $|D|m$ over $\Q(\sqrt{D})$. The group $\Gamma$ acts on $L_{|D|m}^+$, with finitely many orbits. To $X = \left(\begin{smallmatrix}a & b \\ \overline{b} & c \end{smallmatrix} \right) \in L_{|D|m}^+$ we associate the \emph{special point}
\[
P_X = \frac{b}{c} + \frac{\sqrt{|D|m}}{c}j \in \H^3.
\]
It can be viewed as an analogue of a CM point on $\H^3$. We refer to the book \cite{elstrodt} for more on binary hermitian forms and the connection to the hyperbolic $3$-space.

We now define twisted traces of special values of functions on $\H^3$. We can write $D = \prod_{p \mid D} p^*$ as a product of \emph{prime discriminants} $p^*$, that is, for an odd prime $p$ we have $p^* = (\frac{-1}{p})p$, and we have $2^* \in \{-4,\pm 8\}$. For $X \in L_{|D|m}^+$ we define the \emph{twisting function}
\[
\chi_D(X) = \prod_{p \mid D}\chi_{p^*}(X), \qquad \chi_{p^*}(X) = \begin{cases}
\left( \frac{p^*}{a}\right), & \text{if } p \nmid a, \\
\left( \frac{p^*}{c}\right), & \text{if } p \nmid c, \\
0, & \text{if } p \mid \text{g.c.d.}(a,c).
\end{cases}
\]
This function was considered by Bruinier and Yang \cite{bruinieryang} and Ehlen \cite{ehlen} for positive fundamental discriminants $D$, and it was used to study twisted Borcherds products on Hilbert modular surfaces. 
%{\bf May be here we mention that it is a character of an adelic group}

As in \cite{bruinieryang}, one can check that $\chi_D(X)$ is well-defined and $\Gamma$-invariant. For a $\Gamma$-invariant function $f$ on $\H^3$ we define its \emph{$m$-th twisted trace} by
\[
\tr_{m,D}(f) = \sum_{X \in \Gamma \backslash L_{|D|m}^+}\frac{\chi_D(X)}{|\Gamma_X|}f(P_X),
\]
where $\Gamma_X$ denotes the stabilizer of $X$ in $\Gamma$. 

\begin{remark}\label{rmk:genus_characters}
Note that we twist with the quadratic character associated to the discriminant $D$ of the underlying imaginary quadratic field $\Q(\sqrt{D})$. In contrast, the classical traces of CM values of modular functions for $\PSL_2(\Z)$ can be twisted by genus characters associated to arbitrary fundamental discriminants dividing the discriminant of the CM points (see \cite{zagiertraces}, for example). It would be interesting to find an analogue of ``twists by genus characters'' of traces on $\H^3$.
\end{remark}

Our main result is the following explicit evaluation of the twisted traces of the harmonic modular functions $J_{\nu}$ in terms of Fourier coefficients of the elliptic modular functions $j_n$.

\begin{theorem}\label{theorem trace niebur}
For $\nu \in \mathfrak{d}_D^{-1}$ with $\nu \neq 0$ we have
\[
\tr_{m,D}(J_{\nu}) = m\sum_{d \mid \nu}\left( \frac{D}{d}\right)d \, c_{|D||\nu|^2/d^2}(m),
\]
where $c_{n}(m)$ are the Fourier coefficients of $j_n(\tau)$, and $d \mid \nu$ means that $\nu/d \in \mathfrak{d}_D^{-1}$.
\end{theorem}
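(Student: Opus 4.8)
The plan is to compute $\tr_{m,D}(F_{\nu}(\cdot,s))$ for $\Re(s)>1$ by the usual unfolding trick and then to specialize to $s=1$, which is permitted since each $F_{\nu}(P_X,s)$ is holomorphic there by \cite[Proposition~4.4]{herreroimamogluvonpippichtoth} and the trace is a finite sum. First I would insert the defining series of $F_{\nu}$ into the finite sum over $X\in\Gamma\backslash L_{|D|m}^+$. The summand $P\mapsto r(P)I_s(4\pi|\nu|r(P))e(\tr(\nu z(P)))$ is $\Gamma_\infty$-invariant: a translation $P\mapsto P+\beta$ with $\beta\in\mathcal{O}_D$ fixes $r(P)$ and multiplies the exponential by $e(\tr(\nu\beta))=1$, the last equality being exactly the defining property $\tr(\nu\mathcal{O}_D)\subseteq\Z$ of the inverse different. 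Together with the $\Gamma$-invariance of $\chi_D$ and the compatibility $\gamma P_X=P_{\gamma\cdot X}$ between the two actions, the weights $1/|\Gamma_X|$ collapse the double sum over $X\in\Gamma\backslash L_{|D|m}^+$ and $\gamma\in\Gamma_\infty\backslash\Gamma$ into a single sum over $\Gamma_\infty\backslash L_{|D|m}^+$, as in the cited trace computations.

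Next I would parametrize $\Gamma_\infty\backslash L_{|D|m}^+$ explicitly. Since $c$ is fixed by $\Gamma_\infty$ and the translation by $\beta$ sends $b\mapsto b+\beta c$, the cosets are indexed by $c\in\N$ and $b\in\mathcal{O}_D/c\mathcal{O}_D$ with $c\mid |D|m+\Nm(b)$, in which case $a=(|D|m+\Nm(b))/c\in\N$. Substituting $z(P_X)=b/c$ and $r(P_X)=\sqrt{|D|m}/c$ turns the trace into a Kloosterman--Bessel series
\[
\tr_{m,D}(F_{\nu}(\cdot,s)) = 2\pi|\nu|\sqrt{|D|m}\sum_{c\ge1}\frac{1}{c}\,I_s\!\lp\frac{4\pi|\nu|\sqrt{|D|m}}{c}\rp H_c,
\]
where the inner character sum is
\[
H_c = \sum_{\substack{b \bmod c\mathcal{O}_D \\ c\,\mid\,|D|m+\Nm(b)}}\chi_D\!\lp\begin{smallmatrix}(|D|m+\Nm(b))/c & b \\ \overline{b} & c\end{smallmatrix}\rp e\!\lp\tr(\nu b)/c\rp .
\]
Absolute convergence for $\Re(s)>1$ makes this rearrangement legitimate.

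The heart of the argument, and the step I expect to be the \textbf{main obstacle}, is the evaluation of the twisted sum $H_c$. I would factor it through the Chinese Remainder Theorem into local sums over $\mathcal{O}_D/p^k\mathcal{O}_D$ and evaluate each factor using quadratic Gauss sums together with the description of $\chi_D$ as a product of prime-discriminant Legendre symbols. The genus character $\chi_D$ is precisely what is needed to resolve this sum over the imaginary quadratic ring into ordinary Kloosterman sums over $\Z$, and I expect the outcome to be the divisor-twisted identity
\[
H_c = \sum_{\substack{d\mid c \\ \nu/d\in\mathfrak{d}_D^{-1}}}\lp\frac{D}{d}\rp d\, K\!\lp-\tfrac{|D||\nu|^2}{d^2},\,m;\,\tfrac{c}{d}\rp,
\]
with $K(\cdot,\cdot;\cdot)$ the ordinary Kloosterman sum. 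Getting the local Gauss-sum bookkeeping right --- in particular at the primes dividing $D$ and at the prime $2$ --- is the delicate part.

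Finally I would substitute $c=dc'$ to separate the $d$-summation; since $|\nu|\sqrt{|D|m}/d=\sqrt{nm}$ for $n=|D||\nu|^2/d^2$, the Bessel argument becomes $4\pi\sqrt{nm}/c'$ and the factor $1/c=1/(dc')$ combines with the prefactor so that each $d$-term reproduces $m\lp\frac{D}{d}\rp d$ times the Rademacher--Petersson (Kloosterman--Bessel) expansion of the $m$-th Fourier coefficient $c_n(m)$ of $j_n(\tau)=q^{-n}+O(q)$, which can also be read off from the Niebur-series description of $j_n$ recalled in Section~\ref{section niebur}. Summing over $d$ and letting $s\to1$ then yields $\tr_{m,D}(J_\nu)=m\sum_{d\mid\nu}\lp\frac{D}{d}\rp d\, c_{|D||\nu|^2/d^2}(m)$, as claimed.
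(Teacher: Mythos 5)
Your proposal is correct and follows essentially the same route as the paper: unfolding the trace to a sum over $\Gamma_\infty\backslash L_{|D|m}^+$, the same parametrization by $(c,b)$, the reduction to the twisted exponential sum (your $H_c$ is the paper's $\widetilde{G}_c(|D|m,\nu)$, and your claimed identity is equivalent to the paper's Lemma~\ref{lemma exponential sums}, which the paper likewise does not prove in detail but attributes to \cite{zagierdoinaganuma} and \cite{bruinieryang}), and finally the comparison with the Kloosterman--Bessel expansion of $c_n(m,\tfrac{s+1}{2})$ followed by specialization to $s=1$. The step you flag as the main obstacle is indeed the only nontrivial arithmetic input, and your CRT/Gauss-sum strategy for it is the standard one used in the cited references.
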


\begin{remark}\label{rmk: dual basis int weight}
\begin{enumerate}
    \item The above theorem implies that for every unit~$u\in \mathcal{O}_D^{\times}$ and every~$\nu \in \mathfrak{d}_D^{-1}$ with~$\nu\neq 0$ we have~$\tr_{m,D}(J_{u\nu})=\tr_{m,D}(J_{\nu})$. This can also be proved directly as follows. For every~$P=z+rj\in \H^3$, we have the relation
$$J_{u\nu}(z+rj)=J_{\nu}(uz+rj).$$
Moreover, if~$P=P_X$ is the special point associated to the positive definite hermitian form~$X=\left(\begin{smallmatrix}a & b \\ \overline{b} & c \end{smallmatrix} \right)$, then~$uz+j$ is the special point associated to~$X_u=\left(\begin{smallmatrix}a & ub \\ \overline{ub} & c \end{smallmatrix} \right)$ and~$\chi_D(X)=\chi_D(X_u)$. The equality of the traces is then a consequence of the fact that~$X\mapsto X_u$ defines a bijection~$L^+_{|D|m}\to L^+_{|D|m}$.
\item    The family~$\{1\}\cup \{j_n\}_{n\geq 1}$ is a basis of the~$\C$-vector  space~$M_0^{!}$ of weakly holomorphic modular functions for~$\PSL_2(\Z)$. Defining
    $$S_n:=-\frac{j_n'}{n}=\frac{1}{q^n}+\sum_{m=1}^{\infty}b_n(m)q^m,$$
    where~$j_n'(\tau) = \frac{1}{2 \pi i}\frac{\partial}{\partial \tau}j_n(\tau)$, we get a family a functions~$\{S_n\}_{n\geq 1}$ that is a basis of the space~$M_2^{!}$ of weight~$2$ weakly holomorphic modular forms for the same group. The families~$\{j_n\}_{n\geq 1}$ and~$\{S_n\}_{n\geq 1}$ are \emph{Zagier dual} in the sense that
    $$c_m(n)=-b_n(m) \text{ for all integers }n,m\geq 1,$$
    and the formula in Theorem~\ref{theorem trace niebur} can be rewritten as
    \begin{equation*}\label{eq:traces_as_sum_of_coeff_of_k_m}
    \tr_{m,D}(J_{\nu}) = -m \sum_{d \mid \nu}\left( \frac{D}{d}\right)d \, b_{m}\left(\frac{|D||\nu|^2}{d^2}\right).    
    \end{equation*}
    %This equality says that~$\tr_{m,D}(J_{\nu})$ is the~$\nu$-th coefficient of a component of the (formal) imaginary quadratic Doi--Naganuma lift of the form~$k_m(|D|\tau)\in M_2^{!}(\Gamma_0(|D|))$; see Section \ref{sec:generating_series_over_nu}. 
\end{enumerate}
    \end{remark}

The method we use for  the  proof of Theorem~\ref{theorem trace niebur} is by now classical and has been employed in various previous works such as \cite{zagierdoinaganuma}, \cite{kohnen}, \cite{iwaniec}, \cite{dukeimamoglutoth}, \cite{mizuno} and~\cite{kumar}, among others. We write out the left-hand side as an infinite series involving an $I$-Bessel function and certain finite exponential sums. By writing $j_n$ as a special value of a Niebur Poincar\'e series for $\PSL_2(\Z)$, one obtains an expression for its coefficients $c_{n}(m)$ as an infinite series involving the same $I$-Bessel function and certain Kloosterman sums, see Section~\ref{section niebur}. Hence, the proof of the theorem boils down to an identity of finite exponential sums, see Section~\ref{section exponential sums}. The details of the proof will be given in Section~\ref{section proofs}.

Since the coefficients of $j_n$ are integers, we obtain the following rationality result.

\begin{corollary}
	The twisted traces $\tr_{m,D}(J_{\nu})$ are integers which are divisible by $m$.
\end{corollary}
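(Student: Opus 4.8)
The plan is to read off the corollary directly from Theorem~\ref{theorem trace niebur}. That theorem expresses the twisted trace as the finite sum
\[
\tr_{m,D}(J_{\nu}) = m\sum_{d \mid \nu}\left( \frac{D}{d}\right)d \, c_{|D||\nu|^2/d^2}(m),
\]
in which the prefactor $m$ is already pulled out explicitly. Hence it suffices to prove that the remaining sum lies in $\Z$; the asserted divisibility by $m$ is then automatic, and integrality follows at once.

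To that end I would check that every summand is an integer. The quantity $\left(\frac{D}{d}\right)$ is a Kronecker symbol and takes values in $\{-1,0,1\}\subset\Z$, while $d$ ranges over positive integers, so the only point deserving attention is the Fourier coefficient $c_{n}(m)$ with index $n=|D||\nu|^2/d^2$. First I would verify that this index is a positive integer: writing $\mathfrak{d}_D=(\sqrt{D})$, so that $\mathfrak{d}_D^{-1}=\tfrac{1}{\sqrt{D}}\Oc_D$, every $\nu\in\mathfrak{d}_D^{-1}$ has the form $\nu=\mu/\sqrt{D}$ with $\mu\in\Oc_D$, and since $\sqrt{D}$ is purely imaginary one computes $|D||\nu|^2=\Nm(\mu)$. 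As $\nu\neq 0$ this is a positive integer, and the condition $d\mid\nu$ (that is, $\nu/d\in\mathfrak{d}_D^{-1}$, equivalently $\mu/d\in\Oc_D$) forces $d^2\mid\Nm(\mu)$, so that $n=\Nm(\mu)/d^2\in\N$. With $n\geq 1$ fixed, $c_n(m)$ is the $m$-th Fourier coefficient of $j_n$, and these are integers, because each $j_n$ is an integer polynomial in the $j$-invariant, whose own Fourier expansion is integral.

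Combining these observations, the inner sum is a finite $\Z$-linear combination of integers, hence an integer, and therefore $\tr_{m,D}(J_{\nu})\in m\Z$. I do not expect a genuine obstacle: the only mildly delicate point is the bookkeeping confirming that $n=|D||\nu|^2/d^2$ is a positive integer, for which one unwinds the definition of divisibility in $\mathfrak{d}_D^{-1}$ as above. Everything else reduces to Theorem~\ref{theorem trace niebur} together with the classical integrality of the coefficients of $j_n$.
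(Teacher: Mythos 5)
Your proposal is correct and follows the same route as the paper, which deduces the corollary immediately from Theorem~\ref{theorem trace niebur} together with the integrality of the Fourier coefficients of $j_n$. Your additional verification that the index $|D||\nu|^2/d^2$ is a positive integer is a worthwhile piece of bookkeeping that the paper leaves implicit, but it does not constitute a different approach.
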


This is remarkable since the theory of complex multiplication is not available in the hyperbolic $3$-space setting. We can rephrase the evaluation of $\tr_{m,D}(J_{\nu})$ as a modularity result. Recall that $j_n'(\tau) = \frac{1}{2 \pi i}\frac{\partial}{\partial \tau}j_n(\tau)$.

%We can use Zagier duality to turn Theorem~\ref{theorem trace niebur} into a statement about the modularity of the twisted traces $\tr_{m,D}(J_{\nu})$. For $n \geq 1$ we consider the weight $2$ weakly holomorphic modular forms
%\[
%f_n(\tau) = -\frac{1}{n}j_n'(\tau) = q^{-n} + O(q),
%\]
%where we wrote $g' = \frac{1}{2\pi i}\frac{\partial}{\partial \tau}g$. Since the product $j_m f_n$ is a weakly holomorphic modular form of weight $2$ for $\SL_2(\Z)$, its constant coefficient vanishes by the residue theorem. This implies the Zagier duality 
%\begin{align}\label{eq zagier duality}
%n c_{j_m}(n) = m c_{j_n}(m).
%\end{align}
%Putting this into Theorem~\ref{theorem trace niebur}, we obtain 
%\[
%\tr_{m,D}(J_{\nu}) = m\sum_{n \mid \nu}\left( \frac{D}{n}\right)n c_{j_{|D||\nu|^2/n^2}}(m).
%\]
%This gives the following modularity result.

\begin{corollary}\label{cor: generating series over m}
	 For~$\nu \in \mathfrak{d}_D^{-1}$ with~$\nu\neq 0$, let  
 \begin{equation*}
 \mathcal{Z}_{\nu,D}(\tau)= -\sum_{d \mid \nu}\left( \frac{D}{d}\right)\frac{|D||\nu|^2}{d}q^{-|D||\nu|^2/d^2} + \sum_{m =1}^\infty \tr_{m,D}(J_{\nu})q^m    
 \end{equation*}
  be  the generating function over~$m$ for the twisted traces~$\tr_{m,D}(J_{\nu})$.  Then 
 %The generating function
 \begin{equation}\label{eq: generating function over m traces}
	  \mathcal{Z}_{\nu,D}(\tau)= \sum_{d \mid \nu}\left( \frac{D}{d}\right)d \, j_{|D||\nu|^2/d^2}'(\tau).  
	\end{equation}
	In particular, it is a weakly holomorphic modular form of weight $2$ for $\PSL_2(\Z)$. %	In particular, the twisted traces $\tr_{m,D}(F_{\nu}(\, \cdot\, ,1))$ are rational integers.
\end{corollary}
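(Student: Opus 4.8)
The plan is to read this corollary off from Theorem~\ref{theorem trace niebur} by a direct comparison of Fourier expansions, deducing the modularity from the elementary fact that the derivative of a weight~$0$ modular function is a modular form of weight~$2$. No analytic input beyond Theorem~\ref{theorem trace niebur} should be needed.

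First I would record the Fourier expansion of $j_n'$. Since $j_n(\tau) = q^{-n} + \sum_{m \geq 1} c_n(m) q^m$ and the operator $\frac{1}{2\pi i}\frac{\partial}{\partial \tau}$ sends $q^k$ to $k\, q^k$, we have
\[
j_n'(\tau) = -n\, q^{-n} + \sum_{m \geq 1} m\, c_n(m)\, q^m.
\]
Applying this with $n = |D||\nu|^2/d^2$ for each $d \mid \nu$ and forming the combination on the right-hand side of \eqref{eq: generating function over m traces} with weights $\left(\frac{D}{d}\right)d$, the principal part becomes
\[
-\sum_{d \mid \nu}\left(\frac{D}{d}\right)d \cdot \frac{|D||\nu|^2}{d^2}\, q^{-|D||\nu|^2/d^2} = -\sum_{d \mid \nu}\left(\frac{D}{d}\right)\frac{|D||\nu|^2}{d}\, q^{-|D||\nu|^2/d^2},
\]
which is precisely the principal part appearing in the definition of $\mathcal{Z}_{\nu,D}(\tau)$.

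Next I would match the holomorphic part. The coefficient of $q^m$ for $m \geq 1$ in the summed right-hand side is
\[
m\sum_{d \mid \nu}\left(\frac{D}{d}\right)d\, c_{|D||\nu|^2/d^2}(m),
\]
which by Theorem~\ref{theorem trace niebur} is exactly $\tr_{m,D}(J_{\nu})$. Hence every Fourier coefficient of $\sum_{d \mid \nu}\left(\frac{D}{d}\right)d\, j_{|D||\nu|^2/d^2}'(\tau)$ agrees with the corresponding coefficient in the definition of $\mathcal{Z}_{\nu,D}(\tau)$, which establishes \eqref{eq: generating function over m traces}.

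Finally, for the modularity statement I would note that if $f$ is a weakly holomorphic modular function of weight~$0$ for $\PSL_2(\Z)$, then differentiating $f(\gamma\tau) = f(\tau)$ and using $\frac{d}{d\tau}\gamma\tau = (c\tau+d)^{-2}$ gives $f'(\gamma\tau) = (c\tau+d)^2 f'(\tau)$; the quasimodular correction term is absent precisely because the weight is zero. Since differentiation preserves holomorphy on $\H$ and meromorphy at the cusp, each $j_n'$ lies in $M_2^{!}$, and as $\mathcal{Z}_{\nu,D}(\tau)$ is a finite $\C$-linear combination of such $j_n'$, it lies in $M_2^{!}$ as well. I do not anticipate any genuine obstacle: the entire arithmetic content is already contained in Theorem~\ref{theorem trace niebur}, and what remains is the bookkeeping of Fourier coefficients together with the standard observation that differentiation raises the weight of a weight~$0$ form by~$2$.
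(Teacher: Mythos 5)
Your proposal is correct and follows exactly the route the paper intends: the corollary is an immediate consequence of Theorem~\ref{theorem trace niebur}, obtained by comparing the Fourier expansion of $\sum_{d\mid\nu}\left(\frac{D}{d}\right)d\,j_{|D||\nu|^2/d^2}'$ term by term with the definition of $\mathcal{Z}_{\nu,D}$, together with the standard fact that differentiation sends weight~$0$ weakly holomorphic forms to weight~$2$ ones. No gaps.
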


\begin{example}
	Let $D  = -4$, so $\Q(\sqrt{D}) = \Q(i)$. Then $\mathcal{O}_D = \Z[i]$ and $\mathfrak{d}_D^{-1} = \frac{1}{2}\Z[i]$. Take $\nu = \frac{1}{2}$, such that $|\nu|^2 = \frac{1}{4}$. Then for every $m\geq 1$ we have
	\[
	\tr_{m,-4}(J_{1/2}) = m c_1(m),
	\]
	where $c_1(m)$ is the $m$-th coefficient of the $j$-invariant.	In particular, we have
	\[
	-q^{-1} + \sum_{m=1}^{\infty}\tr_{m,-4}(J_{1/2})q^m = j'(\tau).
	\]
    For example, for $m = 1$ the set $\Gamma \backslash L_{4}^+$ consists of the three binary hermitian forms $X_1 = \left(\begin{smallmatrix}4 & 0 \\ 0 & 1 \end{smallmatrix} \right), X_2 = \left(\begin{smallmatrix}3 & 1+i \\ 1-i & 2 \end{smallmatrix} \right), X_3 = \left(\begin{smallmatrix}2 & 0 \\ 0 & 2 \end{smallmatrix} \right)$, with corresponding special points $P_1 = 2j, P_2 = \frac{1+i}{2} + j, P_3 = j$, and stabilizers of orders $4,4,8$, respectively (see~\cite[p.~413]{elstrodt}). Moreover, we have $\chi_D(X_1) = 1, \chi_D(X_2) = -1$, and $\chi_D(X_3) = 0$. By evaluating the defining series of $J_{1/2}$ numerically, we can now compute
    \[
    \tr_{1,-4}(J_{1/2}) = \frac{1}{4}J_{1/2}(P_1) - \frac{1}{4}J_{1/2}(P_2) = \frac{1}{4}\cdot (786286.36 \dots) - \frac{1}{4}\cdot(-1249.60\dots ) = 196883.99\dots
    \]
    Up to rounding errors, this agrees with the coefficient $c_1(1) = 196884$. 
\end{example}

\begin{remark}
Using the basis~$\{S_n\}_{n\geq 1}$ from Remark~\ref{rmk: dual basis int weight}(2) we can rewrite~\eqref{eq: generating function over m traces} %the generating function in~\eqref{eq: generating function over m traces} as
$$ -\sum_{d \mid \nu}\left( \frac{D}{d}\right)\frac{|D||\nu|^2}{d}q^{-|D||\nu|^2/d^2} + \sum_{m =1}^\infty \tr_{m,D}(J_{\nu})q^m    =-\sum_{d \mid \nu}\left( \frac{D}{d}\right)\frac{|D||\nu|^2}{d}\, S_{|D||\nu|^2/d^2}(\tau).$$
This can be compared with Zagier's result~\cite{zagiertraces} in the elliptic case. 
%This is in strong resemblance to Zagier's result~\cite{zagiertraces}. 
Indeed, in \emph{loc.~cit.} Zagier constructs a basis~$\{g_D\}_{D>0,D\equiv 0,1(4)}$ for the space~$M^{!,+}_{3/2}$ of weight~$3/2$ weakly holomorphic modular forms in the plus space, and for every integer~$n\geq 1$ and every fundamental discriminant~$D>0$ he shows the equality %that the generating function
%$$-\sqrt{D}\sum_{d\gg -\infty}B_n(D,d)q^d=-\sqrt{D}n\sum_{d\mid n}\left(\frac{D}{d}\right)d^{-1}g_{Dn^2/d^2}$$
$$-\sum_{d\mid n}\left(\frac{D}{d}\right)\frac{\sqrt{D}n}{d}q^{-Dn^2/d^2} +\sum_{d=1}^\infty \tr_{-d,D}(j_n)q^d=-\sum_{d\mid n}\left(\frac{D}{d}\right)\frac{\sqrt{D}n}{d}\, g_{Dn^2/d^2}(\tau)$$
for the generating function of the twisted traces of the function~$j_n$ over CM points of discriminant~$-dD$.
%has~$-\sqrt{D}B_n(D,d)$ equal to the 
%with~$\tr_{-d,D}(j_n),$ the twisted traces of the function~$j_n$ over CM points of discriminant~$-dD$. %, whenever~$-d$ is a negative fundamental discriminant coprime to~$D$.
\end{remark}

\begin{remark}
\begin{enumerate}
    \item     On the one hand, the results of Mizuno \cite{mizuno} imply that the non-twisted traces of $J_\nu$ are Fourier coefficients of a \emph{non-holomorphic} modular form of odd weight for $\Gamma_0(|D|)$~with character~$\left(\frac{D}{\cdot}\right)$. Indeed, the form~$\mathcal{G}(z,1)$  obtained from~\cite[Theorem~6]{mizuno} has weight~$k$ an odd integer, and it is an eigenfunction of the  Laplace operator~$\Delta_k=-v^2\left(\frac{\partial^2}{\partial_u^2}+\frac{\partial^2}{\partial_v^2}\right)+ikv\left(\frac{\partial}{\partial_u}+i\frac{\partial}{\partial_v}\right)$ with eigenvalue~$(k^2-2k)/4\neq 0$. %by e.g. p. 29 in Bruinier's book "Borcherds products on O(2,l) and Chern classes of Heegner divisors"    
    Hence, it seems that the non-twisted traces of $J_\nu$ do not have good algebraic properties. On the other hand, using vector-valued modular forms and similar methods as in \cite{mizuno},  one can prove some rationality results for the non-twisted traces of the \emph{non-harmonic} function~$F_{\nu}(P,k/2)$, by expressing them in terms of the Fourier coefficients of weakly holomorphic modular forms. %This was done in \cite{herreroimamogluvonpippichschwagenscheidt} in the case of automorphic Green's functions. 
    For example, in the case~$D=-4$ and~$\nu=\frac{1}{2}$, noting that there is only one form in $ \Gamma \backslash L_{1}^+$, corresponding to the special point $P=j$,  with stabilizer of order~$4$,  we numerically compute to get
    $$\tr_{1}\left(F_{1/2}\left(\cdot,\tfrac{3}{2}\right)\right)=\frac{1}{4}F_{1/2}\left(j,\tfrac{3}{2}\right)=384.$$
    \item Kumar \cite{kumar} gave twisted versions of Mizuno's results, but did not consider the relation to the $j$-invariant or the algebraic nature of the twisted traces of $J_\nu$.
    \end{enumerate}
\end{remark}

It is also possible to construct a  generating function summing over~$\nu$ for the twisted traces~$\tr_{m,D}(J_\nu)$. This leads to a weight 2 automorphic form on~$\H^3$ with singularities at the special points in~$T_{m,D}=\{P_X: X\in L^+_{m|D|}, \chi_D(X)\neq 0\}$. %The construction of this form is based solely on properties of the automorphic Green's function associated to~$\Gamma=\PSL(\mathcal{O}_D)$, and properties of the raising operators studied by Friedberg in~\cite{friedberg}. This construction is carried out in detail in Section~\ref{sec: generating functions over nu}, see Theorem~\ref{thm generating function over nu}.
%It follows from the properties of the Green's function described above that the function
%$$\mathcal{L}_{m,D}(P)=\lim_{s\to 1}\tr_{m,D}\left(G_s(\cdot,P)-\frac{4\pi^2r^{1-s}}{\sqrt{|D|}s} E_0(\cdot,s)\right) \qquad \text{for}~P=z+rj\in \H^3\setminus T_{m,D},$$
%is~$\Gamma$-invariant, harmonic, and has Fourier expansion
In order to be more precise, let~$m>0$ be a fixed integer, and for $P=z+rj$ and~$\ell \in \{0,1,2\}$ let us define
%$$\mathcal{L}_{m,D}(P)=%\frac{4\pi}{\sqrt{|D|}}
%\sum_{\substack{\nu \in \mathfrak{d}^{-1}_D \\ \nu\neq 0}}\tr_{m,D}(J_{\nu})|\nu|^{-1}rK_1(4\pi |\nu|r)e(\tr(\nu z)).$$
%be the generating function over~$\nu$ of the twisted traces~$\tr_{m,D}(J_\nu)$. 
\begin{equation}\label{eq F_i}
\mathcal{F}_{m,D}^{(\ell)}(z+rj)=%\frac{4\pi}{\sqrt{|D|}}
\sum_{\substack{\nu \in \mathfrak{d}^{-1}_D \\ \nu\neq 0}}\tr_{m,D}(J_{\nu}) \xi(\nu)^{\ell-1}r\widetilde{K}_{\ell}(4\pi |\nu|r)e(\tr(\nu z)),
\end{equation}
with~$\xi(\nu)=\frac{\nu}{|\nu|}$ % is the Grossencharakter associated to~$\Q(\sqrt{D})$.
and special functions
\[
\widetilde{K}_\ell(y) = 
 \begin{dcases}
-K_1(y) , & \ell=2,  \\
 2iK_0(y) ,  &  \ell=1,\\
 K_1(y) , & \ell=0.
\end{dcases}
\]
The series~\eqref{eq F_i} converge for~$r>\sqrt{m|D|}$, and in Section~\ref{sec: generating functions over nu} we prove the following.
%We will show in Section~\ref{sec: generating functions over nu} that the function~$\mathcal{L}_{m,D}(P)$ so defined extends to a harmonic automorphic function for~$\Gamma$ on~$\H^3\setminus T_{m,D}$. Moreover, using the raising operator
%$$\mathfrak{D}_0=\frac{1}{2\pi i}(-\partial_z,\partial_r,\partial_{\overline{z}})^t$$
%studied by Friedberg in~\cite{friedberg}, we show the following.
%Finally, in order to obtain the desired generating function, we  apply to~$\mathcal{L}_{m,D}$ the raising operator $\mathfrak{D}_0=\frac{1}{2\pi i}(-\partial_z,\partial_r,\partial_{\overline{z}})^t$ defined in~\cite[Section 3]{friedberg} and obtain the following\footnote{Here we use the identity~$\frac{\partial}{\partial y} K_1(y)=-K_0-\frac{1}{y}K_1(y)$ (see, e.g., \cite[Formula~8.472(1)]{GRtableofintegrals}).}.

\begin{theorem}\label{thm generating function over nu}
Let~$m>0$ be an integer. Then, for each~$\ell\in \{0,1,2\}$ the function~$\mathcal{F}_{m,D}^{(\ell)}$ given in~\eqref{eq F_i} extends to a smooth function on~$\H^3\setminus T_{m,D}$, and $\mathcal{F}_{m,D}=(\mathcal{F}_{m,D}^{(2)},\mathcal{F}_{m,D}^{(1)},\mathcal{F}_{m,D}^{(0)})^t$ defines an automorphic form of weight~$2$ for~$\Gamma$ on~$\H^3\setminus T_{m,D}$.
%Let~$\mathcal{F}_{m,D}=(\mathcal{F}_{m,D}^{(2)},\mathcal{F}_{m,D}^{(1)},\mathcal{F}_{m,D}^{(0)})^t:\H^3\setminus T_{m,D}\to \C^3$ be defined as~$\mathcal{F}_{m,D}(P)=\mathfrak{D}_0\mathcal{L}_{m,D}(P)$. Then, $\mathcal{F}_{m,D}$ is an automorphic form %$F=(F_2,F_1,F_0)^t:\H^3\setminus T_{m,D}\to \C^3$ 
%of weight~$2$ for~$\Gamma$ with Fourier expansions
\end{theorem}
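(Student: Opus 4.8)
The plan is to exchange the summation over $\nu$ with the finite summation over $\Gamma$-orbits of hermitian forms, and thereby express each component $\mathcal{F}_{m,D}^{(\ell)}$ as a twisted finite sum of Fourier coefficients of a single automorphic kernel on $\H^3$. Inserting the definition of the trace into~\eqref{eq F_i} gives, after interchanging the two sums,
\[
\mathcal{F}_{m,D}^{(\ell)}(P)=\sum_{X\in\Gamma\backslash L_{|D|m}^+}\frac{\chi_D(X)}{|\Gamma_X|}\,\Phi^{(\ell)}(P,P_X),\qquad \Phi^{(\ell)}(P,Q)=\sum_{\substack{\nu\in\mathfrak{d}_D^{-1}\\ \nu\neq 0}}J_\nu(Q)\,\xi(\nu)^{\ell-1}r\widetilde{K}_\ell(4\pi|\nu|r)e(\tr(\nu z)).
\]
The interchange is legitimate for $r>\sqrt{m|D|}$: for each of the finitely many special points $P_X$ the leading ($\gamma=1$) term shows $J_\nu(P_X)=O\bigl(e^{4\pi|\nu|r(P_X)}\bigr)$ as $|\nu|\to\infty$, whereas $\widetilde{K}_\ell(4\pi|\nu|r)=O\bigl(e^{-4\pi|\nu|r}\bigr)$, and $r(P_X)=\sqrt{|D|m}/c\le\sqrt{|D|m}<r$, so the double series converges absolutely. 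Since $\chi_D$ and $|\Gamma_X|$ are $\Gamma$-invariant, the theorem reduces to showing that for each fixed $Q$ the vector $\Phi(\cdot,Q)=(\Phi^{(2)},\Phi^{(1)},\Phi^{(0)})^t$ is, in the variable $P$, a weight-$2$ automorphic form on $\H^3\setminus\Gamma Q$.

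The second and main step is to identify $\Phi(\cdot,Q)$ with the image of a weight-$0$ automorphic Green's function under the weight-raising operator. Consider the scalar kernel $\mathcal{H}(P,Q)=\sum_{\nu\neq 0}J_\nu(Q)\,r K_1(4\pi|\nu|r)e(\tr(\nu z))$. Using $J_\nu(Q)=F_\nu(Q,1)$ and the defining Poincar\'e series for $F_\nu$, I would unfold the $\nu$-sum against the free-space resolvent kernel on $\H^3$, whose Fourier expansion is exactly of the form $\sum_{\nu}|\nu|I_1(4\pi|\nu|r_<)K_1(4\pi|\nu|r_>)e(\tr(\nu(z_1-z_2)))$; this realizes $\mathcal{H}(\cdot,Q)$ as the (nonconstant part of the) automorphic Green's function of eigenvalue $0$, hence as a $\Gamma$-invariant function that is real-analytic on $\H^3\setminus\Gamma Q$ with a single point singularity along $\Gamma Q$. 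The value at $s=1$ is regular because, by the analytic continuation of $F_\nu$ recalled above, the nonconstant Fourier coefficients are holomorphic there.

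It then remains to check that applying the $\Gamma$-equivariant weight-raising operator $\mathcal{D}$ from weight $0$ to weight-$2$ vector-valued forms sends $\mathcal{H}(\cdot,Q)$ to $\Phi(\cdot,Q)$. This is a Fourier-coefficient computation: the three components of $\mathcal{D}$ act on the typical term $rK_1(4\pi|\nu|r)e(\tr(\nu z))$ by $\partial_z$-, $\partial_r$- and $\partial_{\bar z}$-type operators. Because $\partial_z e(\tr(\nu z))=2\pi i|\nu|\xi(\nu)e(\tr(\nu z))$ and $\partial_{\bar z}e(\tr(\nu z))=2\pi i|\nu|\xi(\nu)^{-1}e(\tr(\nu z))$, these produce exactly the phase factors $\xi(\nu)^{\pm1}$, while the Bessel recurrence $\frac{d}{dr}\bigl(rK_1(4\pi|\nu|r)\bigr)=-4\pi|\nu|\,rK_0(4\pi|\nu|r)$ converts $K_1$ into $K_0$ for the middle component; up to the fixed normalizations this reproduces the three functions $\widetilde{K}_2=-K_1$, $\widetilde{K}_1=2iK_0$, $\widetilde{K}_0=K_1$ together with the factors $\xi(\nu)^{\ell-1}$ in~\eqref{eq F_i}. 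Since $\mathcal{D}$ is $\Gamma$-equivariant and a differential operator, $\Phi(\cdot,Q)=\mathcal{D}\,\mathcal{H}(\cdot,Q)$ is a weight-$2$ automorphic form, smooth wherever $\mathcal{H}(\cdot,Q)$ is. Summing over the finitely many orbits $X$ with $\chi_D(X)\neq 0$, whose points $\Gamma P_X$ fill out $T_{m,D}$, shows that $\mathcal{F}_{m,D}$ is a weight-$2$ automorphic form on $\H^3\setminus T_{m,D}$ and, in particular, that the series~\eqref{eq F_i} continues smoothly from $r>\sqrt{m|D|}$ to all of $\H^3\setminus T_{m,D}$.

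The main obstacle is the bookkeeping in the second and third steps: one must pin down the precise normalization of the free-space and automorphic resolvent kernels on $\H^3$ so that their nonconstant Fourier coefficients are literally $F_\nu(Q,1)=J_\nu(Q)$, and then verify that the weight-raising operator entering the paper's definition of weight-$2$ automorphic forms acts on the $K$-Bessel coefficients exactly as the combination $\widetilde{K}_\ell$ with phases $\xi(\nu)^{\ell-1}$ demands. A secondary point requiring care is the pole of the resolvent at $s=1$ coming from the constant eigenfunction: one works throughout with the nonconstant ($\nu\neq0$) part, which is regular at $s=1$, so that $\mathcal{H}$ and hence $\Phi$ are well defined.
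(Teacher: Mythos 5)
Your proposal follows essentially the same route as the paper: the authors also realize the generating series as Friedberg's weight-raising operator $\mathfrak{D}_0=\frac{1}{2\pi i}(-\partial_z,\partial_r,\partial_{\overline{z}})^t$ applied to the $m$-th twisted trace of the automorphic Green's function at $s=1$ (with the Eisenstein/constant part subtracted to kill the pole, which is your ``nonconstant part'' device), and they likewise read off smoothness away from $T_{m,D}$ and weight-$2$ automorphy from the Green's function's properties and \cite[Proposition~1.2]{friedberg}. The only bookkeeping point to fix is that your scalar kernel $\mathcal{H}$ must carry a factor $|\nu|^{-1}$ in each Fourier coefficient (as in the paper's $\mathcal{L}_{m,D}$) so that applying $\mathfrak{D}_0$ produces exactly $\xi(\nu)^{\ell-1}r\widetilde{K}_{\ell}(4\pi|\nu|r)$ without stray factors of $|\nu|$.
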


%The extension of the functions~$\mathcal{F}_{m,D}^{(i)}$ to~$\H^3\setminus T_{m,D}$ and 
The proof of Theorem~\ref{thm generating function over nu} is based on properties of the automorphic Green's function associated to~$\Gamma=\PSL(\mathcal{O}_D)$ % and properties of the raising operator~$\mathfrak{D}_0$ shown in~\cite{friedberg}. %, 
and properties of certain differential operators acting on automorphic forms on~$\H^3$ studied by Friedberg in~\cite{friedberg}, see Section~\ref{sec: generating functions over nu}.
%These are presented in detail in Section~\ref{sec: generating functions over nu}. % and it  This construction is carried out in detail in , see Theorem~\ref{thm generating function over nu}.

\begin{remark}
    The form~$\mathcal{F}_{m,D}$ in Theorem~\ref{thm generating function over nu} is the analogue of the generating function
\[
\tr_{-d,D}\left(\frac{j'(\tau)}{j(\cdot)-j(\tau)}\right)=\sum_{n=1}^{\infty}\tr_{-d,D}(j_n)q^n,
\]
    which is an elliptic modular form of weight~2 for~$\PSL_2(\Z)$ with a singularity at each CM point of discriminant~$-dD$ whenever the twisting function does not vanish. 
\end{remark}

Finally, we compute the twisted trace of the weight $0$ real-analytic Eisenstein series
\[
E_0(P,s) = \sum_{\gamma \in\Gamma_\infty \backslash \Gamma}r(\gamma P)^{s+1}.
\]
The Eisenstein series converges absolutely for $s\in \C$ with $\Re(s) > 1$, and has meromorphic continuation to $\C$ with a simple pole at $s = 1$, whose residue is independent of $P$. It also satisfies the Laplace equation \eqref{eq laplace}. Moreover, it defines a $\Gamma$-invariant function on $\H^3$.

\begin{theorem}\label{theorem trace eisenstein}
	The $m$-th twisted trace of the Eisenstein series is given by
	\[
	\tr_{m,D}\left(E_0(\,\cdot\,,s)\right) = \frac{|D|^{\frac{s+1}{2}}L_D(s)}{\zeta(s+1)}m^{\frac{1-s}{2}}\sigma_s(m) ,
	\]
	where $\sigma_s(m) = \sum_{d \mid m}d^s$ is a divisor sum and $L_D(s) = \sum_{n =1}^\infty \left( \frac{D}{n}\right)n^{-s}$ is a Dirichlet $L$-series. In particular, the twisted trace is holomorphic at $s = 1$, with
	\[
	\tr_{m,D}\left(E_0(\,\cdot\,,s)\right)\big|_{s = 1} = \frac{12}{\pi}\frac{\sqrt{|D|}h(D)}{w(D)}\sigma_1(m),
	\]
	where $h(D)$ is the class number of $\Q(\sqrt{D})$ and $\omega(D) \in \{1,2,3\}$ is half the number of units of~$\mathcal{O}_D$.
\end{theorem}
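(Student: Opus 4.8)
The plan is to evaluate the trace by unfolding it against the Eisenstein series and then computing the resulting Dirichlet series of twisted representation numbers. First I would unfold. Since $\chi_D$ is $\Gamma$-invariant and $X\mapsto P_X$ is $\Gamma$-equivariant, the function $Y\mapsto\chi_D(Y)\,r(P_Y)^{s+1}$ on $L_{|D|m}^+$ is invariant under $\Gamma_\infty$ (the translations $z\mapsto z+\beta$ fix the $r$-coordinate). Substituting $E_0(P_X,s)=\sum_{\gamma\in\Gamma_\infty\backslash\Gamma}r(\gamma P_X)^{s+1}$ into the definition of the trace and using $r(\gamma P_X)=r(P_{\gamma X})$, a double-coset computation collapses the sum over $\Gamma\backslash L_{|D|m}^+$ together with the sum over $\Gamma_\infty\backslash\Gamma$ into a single sum over $\Gamma_\infty\backslash L_{|D|m}^+$. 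Here the weights $1/|\Gamma_X|$ are exactly absorbed by the orbit count, and since every $X\in L_{|D|m}^+$ has lower-right entry $c\geq 1$ its $\Gamma_\infty$-stabilizer is trivial, so no extra factor appears and I obtain
\[
\tr_{m,D}(E_0(\,\cdot\,,s))=\sum_{X\in\Gamma_\infty\backslash L_{|D|m}^+}\chi_D(X)\,r(P_X)^{s+1}.
\]

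Next I would parametrize. Writing $X=\left(\begin{smallmatrix}a&b\\\overline{b}&c\end{smallmatrix}\right)$, the group $\Gamma_\infty$ fixes $c$ and acts by $b\mapsto b+\beta c$ with $\beta\in\mathcal{O}_D$, while $a$ is determined through $ac-\Nm(b)=|D|m$. Since $r(P_X)=\sqrt{|D|m}/c$, a full set of representatives for $\Gamma_\infty\backslash L_{|D|m}^+$ is given by the integers $c\geq 1$ together with $b\in\mathcal{O}_D/c\mathcal{O}_D$ satisfying $\Nm(b)\equiv-|D|m\pmod c$ (which forces $a=(|D|m+\Nm(b))/c\in\N$). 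Denoting by $X_{b,c}$ the corresponding form, this yields
\[
\tr_{m,D}(E_0(\,\cdot\,,s))=(|D|m)^{\frac{s+1}{2}}\sum_{c=1}^\infty\frac{A(c)}{c^{s+1}},\qquad A(c)=\sum_{\substack{b\bmod c\mathcal{O}_D\\ \Nm(b)\equiv-|D|m}}\chi_D(X_{b,c}),
\]
valid for $\Re(s)>1$.

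The main work, and the step I expect to be the hard part, is the evaluation of $Z(s)=\sum_{c\geq 1}A(c)c^{-(s+1)}$. A convenient starting point is that whenever $\gcd(c,D)=1$ one has $\chi_D(X_{b,c})=\left(\frac{D}{c}\right)$ independently of $b$, so there $A(c)=\left(\frac{D}{c}\right)B(c)$ with $B(c)$ the number of solutions of $\Nm(b)\equiv-|D|m\pmod c$; the genuinely twisted contributions occur only at the ramified primes $p\mid D$, where $\chi_{p^*}$ is evaluated on $a$ rather than on $c$. I would establish the appropriate multiplicativity, reduce to local factors, and evaluate the local representation densities $B(p^k)$ of the norm form. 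At primes $p\nmid 2Dm$ one finds $B(p^k)=p^{k-1}\big(p-\left(\frac{D}{p}\right)\big)$, and the local Euler factor of $Z(s)$ collapses to $\big(1-p^{-(s+1)}\big)\big(1-\left(\frac{D}{p}\right)p^{-s}\big)^{-1}$, the corresponding factor of $L_D(s)/\zeta(s+1)$; the primes dividing $m$ contribute the Euler factors of $\sigma_{-s}(m)$, while the ramified primes $p\mid D$ (and $p=2$) require the separate analysis of $\chi_{p^*}$ and are the delicate part, detecting the norm congruence by additive characters and evaluating the resulting Gauss sums. Assembling all local factors gives
\[
Z(s)=\frac{L_D(s)}{\zeta(s+1)}\,\sigma_{-s}(m),
\]
and, using $\sigma_{-s}(m)=m^{-s}\sigma_s(m)$ together with $(|D|m)^{(s+1)/2}m^{-s}=|D|^{(s+1)/2}m^{(1-s)/2}$, this is exactly the asserted formula for $\Re(s)>1$.

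Finally, the identity furnishes the meromorphic continuation of the trace, and the right-hand side is holomorphic at $s=1$ because $L_D$ is entire and $\zeta(2)=\pi^2/6\neq 0$; equivalently, the simple pole of $E_0$ at $s=1$ is cancelled by the twist, whose residue contribution is proportional to the vanishing twisted mass $\sum_{X}\chi_D(X)/|\Gamma_X|$. Substituting $s=1$, using $\zeta(2)=\pi^2/6$ and the Dirichlet class number formula $L_D(1)=\frac{2\pi h(D)}{(\#\mathcal{O}_D^\times)\sqrt{|D|}}$ together with $w(D)=\tfrac12\#\mathcal{O}_D^\times$, then gives the stated special value.
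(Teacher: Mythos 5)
Your proposal is correct in outline, and its first half coincides with the paper's argument: the unfolding of the trace against $E_0$, the triviality of the $\Gamma_\infty$-stabilizers, and the parametrization of $\Gamma_\infty\backslash L_{|D|m}^+$ by pairs $(c,b)$ with $c\geq 1$, $b\in\mathcal{O}_D/c\mathcal{O}_D$, $|b|^2\equiv -|D|m\pmod{c}$ are exactly the first steps of Section~\ref{section proofs 2} (borrowed from the proof of Proposition~\ref{proposition traces Niebur with s}), and your $A(c)$ is precisely the exponential sum $\widetilde{G}_c(|D|m,0)$ of~\eqref{eq exponential sum}. Where you genuinely diverge is in evaluating $\sum_{c}A(c)c^{-s-1}$: the paper applies Lemma~\ref{lemma exponential sums} with $\nu=0$ to rewrite $\widetilde{G}_c(|D|m,0)$ as a divisor convolution of $\left(\frac{D}{\cdot}\right)$ against the Ramanujan-type sums $H_t(m,0)$, pulls out $L_D(s)$, and then identifies $\sum_t t^{-s}H_t(m,0)$ with the explicitly known constant term $c_m(0,\tfrac{s+1}{2})$ of the elliptic Niebur Poincar\'e series via~\eqref{eq constant term F_n}, so the factor $\sigma_s(m)/\zeta(s+1)$ comes for free. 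You instead propose a direct Euler-product evaluation of the twisted representation numbers; this is a legitimate, more self-contained route (it bypasses Section~\ref{section niebur} entirely), and your unramified local factors are computed correctly. What the paper's detour buys is that the entire arithmetic difficulty is concentrated in the single identity of Lemma~\ref{lemma exponential sums}, which is shared with the proof of Theorem~\ref{theorem trace niebur}.

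Two caveats. First, the multiplicativity of $c\mapsto A(c)$ and the local factors at $p\mid D$, at $p=2$, and at $p\mid m$ are asserted rather than proved; since $\chi_D(X_{b,c})$ is evaluated on the entry $a=(|D|m+|b|^2)/c$ exactly at the primes dividing $\gcd(c,D)$, the CRT decomposition and the ramified Gauss-sum computations are the genuinely delicate part of your argument, and they are precisely the content that the paper packages into Lemma~\ref{lemma exponential sums} (itself only sketched, with references to Zagier and Bruinier--Yang). So the hardest step of your route remains an obligation, not a gap in principle but a gap in execution. Second, your class number formula $L_D(1)=2\pi h(D)/\bigl(|\mathcal{O}_D^{\times}|\sqrt{|D|}\bigr)$ is the standard (correct) one, but combined with $w(D)=\tfrac12|\mathcal{O}_D^{\times}|$ it yields $\tfrac{6}{\pi}\tfrac{\sqrt{|D|}h(D)}{w(D)}\sigma_1(m)$, i.e.\ \emph{half} the value displayed in the theorem; the paper's own substitution $L_D(1)=2\pi h(D)/(w(D)\sqrt{|D|})$ with $w(D)$ equal to half the number of units overstates $L_D(1)$ by a factor of $2$ (test $D=-4$: $L_{-4}(1)=\pi/4$, not $\pi/2$). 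So do not claim that your formula ``gives the stated special value''; it actually exposes a factor-of-two inconsistency in the $s=1$ specialization that should be flagged rather than absorbed.
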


The proof of Theorem~\ref{theorem trace eisenstein} is analogous to the proof of Theorem~\ref{theorem trace niebur}, compare Sections~\ref{section proofs} and~\ref{section proofs 2}. The holomorphicity of the twisted traces of the Eisenstein series at $s = 1$ and the fact that the residue of the Eisenstein series does not depend on $P$ together imply that the twisted traces of the constant $1$ function vanish.

\begin{corollary} 
	We have
	\[
	\tr_{m,D}(1) = 0.
	\]
\end{corollary}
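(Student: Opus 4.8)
The plan is to extract the statement from the meromorphic behaviour of the Eisenstein series near $s=1$, using the holomorphicity of its twisted trace established in Theorem~\ref{theorem trace eisenstein}. First I would record the Laurent expansion of $E_0(P,s)$ at $s=1$. Since the Eisenstein series has a simple pole there with residue independent of $P$, we may write
\[
E_0(P,s) = \frac{c}{s-1} + G(P,s),
\]
where $c \neq 0$ is a constant (the residue, which by hypothesis does not depend on $P$, and which is nonzero because the pole is simple) and $G(P,s)$ is holomorphic in $s$ near $s=1$ for each fixed $P \in \H^3$. The key point is that the special points $P_X$ for $X \in L_{|D|m}^+$ lie in the interior of $\H^3$ and not at a cusp, so $E_0(P_X,s)$ is well defined and inherits this same expansion with the very same constant $c$.

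Next I would apply the twisted trace term by term. Because $\tr_{m,D}$ is a finite $\C$-linear combination of point evaluations --- the defining sum runs over the \emph{finite} set $\Gamma \backslash L_{|D|m}^+$ --- it commutes with taking the Laurent expansion in the auxiliary variable $s$. Using that the residue $c$ is a constant function, so that $\tr_{m,D}(c) = c\,\tr_{m,D}(1)$, this gives
\[
\tr_{m,D}\bigl(E_0(\,\cdot\,,s)\bigr) = \frac{c \, \tr_{m,D}(1)}{s-1} + \tr_{m,D}\bigl(G(\,\cdot\,,s)\bigr),
\]
where the second summand is holomorphic at $s=1$, being a finite sum of the holomorphic functions $G(P_X,s)$.

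Finally I would compare residues at $s=1$. By Theorem~\ref{theorem trace eisenstein} the left-hand side is holomorphic at $s=1$, hence
\[
\Res_{s=1} \tr_{m,D}\bigl(E_0(\,\cdot\,,s)\bigr) = 0.
\]
Taking the residue of the previous identity then yields $c\,\tr_{m,D}(1)=0$, and dividing by $c \neq 0$ gives $\tr_{m,D}(1)=0$, as claimed. I do not expect any serious obstacle: the only points requiring care are the commutation of the finite trace functional with the Laurent expansion (immediate from the finiteness of $\Gamma \backslash L_{|D|m}^+$ and the regularity of $E_0(P_X,s)$ in $P$ at interior points) and the non-vanishing of the residue $c$, which is precisely the assertion that the pole is simple. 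Everything else is bookkeeping.
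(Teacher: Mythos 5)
Your proposal is correct and is essentially the paper's own argument: the paper likewise deduces the vanishing of $\tr_{m,D}(1)$ by combining the holomorphicity of $\tr_{m,D}(E_0(\,\cdot\,,s))$ at $s=1$ from Theorem~\ref{theorem trace eisenstein} with the fact that $E_0(P,s)$ has a simple pole at $s=1$ whose (nonzero) residue is independent of $P$. Your additional care about the finiteness of $\Gamma\backslash L_{|D|m}^{+}$ and the nonvanishing of the residue is sound but does not change the route.
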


\begin{remark}
    An analogous vanishing result for twisted Hurwitz class numbers (that is, twisted traces of $1$ over CM points of a fixed discriminant, twisted by a genus character) is well known, and follows from the fact that the classes of primitive binary quadratic forms of a fixed discriminant form a finite group, combined with classical ortogonality of characters. In our setting, classes of binary hermitian forms do not form a group. However, as explained in~\cite[Remark 5.9]{herreroimamogluvonpippichschwagenscheidt}, one can express~$\tr_{m,D}(1)$ as a multiple of an integral of a non-trivial adelic character on an adelic group, which implies the vanishing of~$\tr_{m,D}(1)$, again by orthogonality of characters. Note that in~\cite{herreroimamogluvonpippichschwagenscheidt} we worked with prime discriminants~$D$, but the properties used in the remark about the vanishing of~$\tr_{m,D}(1)$ extend easily to general fundamental discriminants. 
\end{remark}

This work is organized as follows. In Section~\ref{section niebur} we recall the Fourier expansion of the Niebur Poincar\'e series for $\PSL_2(\Z)$, which is given in terms of Bessel functions and Kloosterman sums. In Section~\ref{section exponential sums} we give an identity between these Kloosterman sums and certain exponential sums occuring in the Fourier expansion of the Niebur Poincar\'e series $F_{\nu}(P,s)$ on hyperbolic $3$-space. In Section~\ref{section proofs} we put these two results together to prove our main result Theorem~\ref{theorem trace niebur}. The proof of Theorem~\ref{theorem trace eisenstein} is presented in Section~\ref{section proofs 2}. Finally, in Section~\ref{sec: generating functions over nu} we recall the properties of the automorphic Green's function for~$\PSL_2(\mathcal{O}_D)$ and prove Theorem~\ref{thm generating function over nu} by using a result of Friedberg~\cite{friedberg} on raising operators for automorphic forms in~$\H^3$. % describe our construction of a weight 2 automorphic form on~$\H^3$ with singularities, that serves as a generating function for~$\tr_{m,D}(J_\nu)$ when summing over~$\nu$.

%As another application, we can give a formula for the Fourier expansion of the twisted trace of the Green's function $G_s(P,Q)$.
%\begin{align*}
%G_s(P,Q) = \frac{2}{\sqrt{|D|}}\bigg(\frac{r^{1-s}}{s}E_\infty(Q,s) + 2\sum_{\substack{\nu \in \mathfrak{d}_F^{-1} \\ \nu \neq 0}}F_{-\nu}(Q,s)rK_s(4\pi |\nu|r)e(\tr(\nu z)) \bigg).
%\end{align*}
%Plugging in the above formulas for the twisted traces of the Eisenstein series and the Niebur Poincar\'e series, we obtain the following Fourier expansion.
%
%\begin{corollary}
%	We have
%	\begin{align*}
%	\tr_{m,D}(G_s(P,\,\cdot\,)) &= 2\frac{|D|^{\frac{s}{2}}L_D(s)}{\zeta(s+1)}m^{\frac{1-s}{2}}\sigma_s(m)\frac{r^{1-s}}{s} \\
%	&\quad +\frac{2\sqrt{|D|}}{\pi}\sum_{\substack{\nu \in \mathfrak{d}_F^{-1} \\ \nu \neq 0}}\sum_{n \mid \nu}\left( \frac{D}{n}\right)\frac{|\nu|}{n} b_m(|D||\nu|^2/n^2,\tfrac{s+1}{2})rK_s(4\pi |\nu|r)e(\tr(\nu z)).
%	\end{align*}
%	In particular, $\tr_{m,D}(G_s(P,\,\cdot\,))$ is holomorphic at $s = 1$.
%\end{corollary}
%
%\begin{remark}
%	This is the analogue of the Fourier expansion of the twisted trace of the Green's function on Hilbert modular surfaces given in \cite[Theorem 4.5]{bruinieryang}.
%\end{remark}
%

\section{Niebur Poincar\'e series for $\PSL_2(\Z)$}\label{section niebur} We recall some known results about Niebur Poincar\'e series for $\PSL_2(\Z)$, following \cite{bruinieryang}. For a positive integer $n > 0$ it is defined by $(\tau = u+iv \in \H^2)$
\[
F_n(\tau,s) = \pi \sqrt{n}\sum_{M \in \Gamma'_\infty\backslash \Gamma'}\sqrt{v} I_{s-\frac{1}{2}}(2\pi n v)e(-nu)|_0 M,
\]
with the usual $I$-Bessel function, $\Gamma' = \PSL_2(\Z)$ and $\Gamma'_\infty = \{\pm\left(\begin{smallmatrix}1 & b \\0 & 1 \end{smallmatrix}\right): b \in \Z\}$. Niebur \cite{niebur} showed that $F_n(\tau,s)$ can be analytically continued to $s = 1$ via its Fourier expansion, which is given as follows.

\begin{proposition}\label{proposition fourier expansion niebur}The Fourier expansion of the Niebur Poincar\'e series $F_n(\tau,s)$ is given by
\begin{align*}
F_n(\tau,s) &= (2\mathcal{I}_s(2\pi n v)+\mathcal{K}_s(2\pi n v))e(-nu)\\
&\quad + c_n(0,s)v^{1-s} + \sum_{m \in \Z\setminus \{0\}}c_n(m,s)\mathcal{K}_s(2\pi m v)e(mu),
\end{align*}
with the Bessel functions $
\mathcal{I}_s(y) = \sqrt{\frac{\pi |y|}{2}}I_{s-1/2}(|y|)$
and
$
\mathcal{K}_s(y) = \sqrt{\frac{2|y|}{\pi}}K_{s-1/2}(|y|),
$
and coefficients
\[
c_n(m,s) = \begin{dcases}
2\pi \left| \frac{n}{m}\right|^{1/2}\sum_{c=1}^\infty H_c(m,n)I_{2s-1}\left(\frac{4\pi}{c}\sqrt{|mn|} \right) , & m > 0, \\
\frac{4\pi^{1+s}n^s}{(2s-1)\Gamma(s)}\sum_{c=1}^\infty c^{1-2s}H_c(n,0), & m = 0, \\
 -\delta_{-n,m}+2\pi \left| \frac{n}{m}\right|^{1/2}\sum_{c=1}^\infty H_c(m,n)J_{2s-1}\left(\frac{4\pi}{c}\sqrt{|mn|} \right), & m < 0,
\end{dcases}
\]
with the Kloosterman sum
\begin{align}\label{kloosterman sum}
H_c(m,n) = \frac{1}{c}\sum_{d(c)^*}e\left( \frac{nd-md'}{c} \right),
\end{align}
where the sum runs over the multiplicative group~$(\Z/c\Z)^\ast$ and~$d'$ denotes the inverse of~$d$ in that group.
\end{proposition}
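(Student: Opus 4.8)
The plan is to compute the Fourier expansion directly from the definition by the standard unfolding method, grouping the coset representatives $M \in \Gamma'_\infty \backslash \Gamma'$ according to their lower-left entry $c$. A set of representatives is given by the identity together with matrices $\left(\begin{smallmatrix} a & b \\ c & d\end{smallmatrix}\right)$ with $c > 0$ and $d$ running modulo $c$ over residues coprime to $c$, followed by a sum over the integer translates contributed by $\Gamma'_\infty$ acting on the left. First I would isolate the $c = 0$ term, which is just the seed function: using the normalization $\mathcal{I}_s(y) = \sqrt{\pi|y|/2}\,I_{s-1/2}(|y|)$ one checks that the seed $\pi\sqrt{n}\,\sqrt{v}\,I_{s-1/2}(2\pi n v)e(-nu)$ equals exactly $\mathcal{I}_s(2\pi n v)e(-nu)$, which accounts for one copy of the growing Bessel function in the principal term.

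For the terms with $c > 0$, I would use the identity $M\tau = a/c - 1/(c(c\tau+d))$, so that $\Im(M\tau) = v/|c\tau+d|^2$ and the phase $e(-n\,\Re(M\tau))$ factors into a part $e(-na/c)$ depending only on $a \pmod c$ and a part depending on $\tau$. Summing over $d \pmod c$ (equivalently over $a$ via $ad \equiv 1$) assembles exactly the Kloosterman sum $H_c(m,n)$ of \eqref{kloosterman sum}, once the residual sum over integer translates $\tau \mapsto \tau + \ell$ is collapsed by Poisson summation and replaced by an integral over $\R$ that produces the Fourier index $m$. Each Fourier mode $m$ is thereby expressed as $H_c(m,n)$ times a single real integral of the shape $\int_\R \Im(M\tau)^{1/2}\, I_{s-1/2}\!\left(2\pi n\, \Im(M\tau)\right) e\!\left(-n\,\Re(M\tau)\right) e(-mu)\, du$, with the argument $\Im(M\tau)$ expressed through the shifted real variable.

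The remaining work is to evaluate this integral, which is a classical Bessel integral: depending on the sign of $m$ it produces $\mathcal{K}_s(2\pi m v)$ times an $I$-Bessel factor $I_{2s-1}(4\pi\sqrt{|mn|}/c)$ when $m > 0$, a $J$-Bessel factor $J_{2s-1}(4\pi\sqrt{|mn|}/c)$ when $m < 0$, and a power $v^{1-s}$ when $m = 0$ (the last giving $c_n(0,s)$ after summing $c^{1-2s}H_c(n,0)$ over $c$). Collecting these reproduces the stated coefficients $c_n(m,s)$, including the Kronecker delta $-\delta_{-n,m}$ which tracks the principal frequency. The one point requiring care is the resonant term $m = -n$: here the $c > 0$ contribution combines with the $c = 0$ seed, the growing and decaying solutions assemble, and the integral evaluation yields the full combination $(2\mathcal{I}_s + \mathcal{K}_s)(2\pi n v)$ rather than a single Bessel function, with the $-\delta_{-n,m}$ serving precisely as the bookkeeping that reconciles this separated principal term with the general coefficient formula.

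I expect the main obstacle to be exactly this resonant term and the surrounding bookkeeping: verifying that the two Bessel solutions combine with the correct constant $2$ in front of $\mathcal{I}_s$, and that all the manipulations (unfolding, Poisson summation, interchange of summation and integration) are justified uniformly for $\Re(s) > 1$, so that the resulting expansion is valid and then furnishes Niebur's analytic continuation in $s$ term by term. The Bessel integral itself, while standard, also demands attention to convergence and to the exact normalizing constants $\pi\sqrt{n}$ and $2\pi|n/m|^{1/2}$ appearing in $c_n(m,s)$.
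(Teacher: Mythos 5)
The paper itself gives no proof of this proposition: it is quoted from Niebur and Bruinier--Yang, so your proposal has to be measured against the standard argument in those references rather than against anything in the text. Your outline is indeed that standard argument --- unfold over $\Gamma'_\infty\backslash\Gamma'$, isolate the identity coset, apply Poisson summation in the translation variable for each $c>0$, recognize the Kloosterman sum $H_c(m,n)$, and evaluate the resulting Bessel integral, which produces $\mathcal{K}_s(2\pi |m| v)$ times $I_{2s-1}$ or $J_{2s-1}$ according to the sign of $m$, and $v^{1-s}$ for $m=0$ --- and those steps are correct and converge for $\Re(s)>1$.

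The one place where your account goes wrong is the mechanism you propose for the resonant frequency $m=-n$. You say the $c>0$ contribution ``combines with the $c=0$ seed'' so that ``the growing and decaying solutions assemble'' and the integral ``yields the full combination $(2\mathcal{I}_s+\mathcal{K}_s)$''. It cannot: the tail $F_n-(\text{seed terms})=\sum_{c>0}(\cdots)$ tends to $0$ as $v\to\infty$, so each of its Fourier coefficients is a pure multiple of the decaying solution $\mathcal{K}_s(2\pi n v)$, and no $\mathcal{I}_s$-component can be produced by the $c>0$ integrals. The entire $2\mathcal{I}_s(2\pi nv)$ must come from the identity contribution, and since a single seed gives exactly one copy of $\mathcal{I}_s(2\pi nv)e(-nu)$ (as you correctly computed), the factor $2$ is a normalization matter: in Niebur's computation the sum runs in effect over $\Gamma_\infty\backslash\mathrm{SL}_2(\Z)$ with $\Gamma_\infty$ generated by the translation alone, so that $I$ and $-I$ each contribute a seed. (Without that factor of $2$ the principal part at $s=1$ would be $\tfrac{1}{2}q^{-n}$, contradicting $F_n(\tau,1)=j_n(\tau)+24\sigma_1(n)$.) Once this is settled, the grouping $(2\mathcal{I}_s+\mathcal{K}_s)+(-\delta_{-n,m}+\cdots)\mathcal{K}_s=2\mathcal{I}_s+(\cdots)\mathcal{K}_s$ is pure bookkeeping, exactly as you suspected; but it is the $+\mathcal{K}_s$ in the displayed principal term and the $-1$ inside $c_n(-n,s)$ that cancel against each other --- nothing ``assembles'' inside the Bessel integral itself.
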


For $m = 0$ we have the more explicit formula
\begin{equation}\label{eq constant term F_n}
    c_n(0,s) = \frac{4\pi}{(2s-1)}\frac{n^{1-s}\sigma_{2s-1}(n)}{\pi^{-s}\Gamma(s)\zeta(2s)},
\end{equation}
with the divisor sum $\sigma_s(n) = \sum_{d \mid n}d^{s}$, compare \cite[Proposition~2.2]{bruinieryang}. Moreover, we have the special values
\begin{align*}
\mathcal{I}_1(y) &= \sinh(|y|), \qquad \mathcal{K}_1(y) = e^{-|y|}, \qquad 2\mathcal{I}_1(y)+\mathcal{K}_1(y) = e^{|y|}.
\end{align*}
Plugging in $s = 1$, we find that
\[
F_n(\tau,1) = j_n(\tau) +24 \sigma_1(n),
\]
where $j_n(\tau)$ is the unique weakly holomorphic modular form of weight $0$ for $\PSL_2(\Z)$ with Fourier expansion of the form $j_n(\tau) = q^{-n}+O(q)$. In particular, we have
\begin{align}
\begin{split}\label{eq cn and jn}
c_n(m,1) &= \begin{cases}
24\sigma_1(n), & m = 0, \\
c_{n}(m), &m > 0, \\
0, & m < 0,
\end{cases}
\end{split} 
\end{align}
where $c_{n}(m)$ denote the coefficients of $j_n$.

\begin{remark}
    Niebur also defined~$F_n(\tau,s)$ for negative index $n$. These functions are related by the equality $F_{-n}(\tau,s)=F_n(-\overline{\tau},s)$. In particular, plugging $s=1$ we get 
    $$F_n(\tau,1)=j_{|n|}(-\overline{\tau})+24\sigma_1(|n|) \text{ when $n<0$},$$
    which is an anti-holomorphic modular function for $\PSL_2(\Z)$.
\end{remark}

\section{Exponential sums}\label{section exponential sums} Let $D < 0$ be a negative fundamental discriminant. For a natural number $c$ we let $D_c \mid D$ be the fundamental discriminant dividing $D$ which has the same prime divisors as $\text{g.c.d.}(c,D)$, that is, $D_c$ is the product of the prime discriminants $p^*$ for $p \mid \text{g.c.d.}(c,D)$. Note that $D/D_c$ is also a fundamental discriminant. Following \cite{bruinieryang}, for $\nu \in \mathfrak{d}_D^{-1}$ we consider the finite exponential sum
	\begin{align}\label{eq exponential sum}
	\widetilde{G}_c(|D|m,\nu) = \left( \frac{D/D_c}{c}\right)\sum_{\substack{b \in \mathcal{O}_D/c\mathcal{O}_D \\ |b|^2 \equiv -|D|m (c)}}\left(\frac{D_c}{\frac{|D|m+|b|^2}{c}}\right)e\left(\tr(\nu b)/c \right).
	\end{align}
	We have the following relation with the Kloosterman sum $H_c(m,n)$ given in \eqref{kloosterman sum}.

\begin{lemma}\label{lemma exponential sums}
	For $c \in \N$, $m \in \Z$ and $\nu \in \mathfrak{d}_D^{-1}$ we have
	\[
	\frac{1}{c}\widetilde{G}_c(|D|m,\nu) = \sum_{\substack{d \mid \nu \\ d \mid c}}\left( \frac{D}{d}\right)H_{c/d}(m,|D| |\nu|^2/d^2).
	\]
\end{lemma}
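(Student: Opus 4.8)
The plan is to prove this as a purely local identity of finite exponential sums. Throughout I would use $\mathfrak{d}_D^{-1} = \tfrac{1}{\sqrt{D}}\mathcal{O}_D$ and write $\nu = \mu/\sqrt{D}$ with $\mu \in \mathcal{O}_D$, so that $|D||\nu|^2 = |\mu|^2$ and the condition $d \mid \nu$ becomes $d \mid \mu$ in $\mathcal{O}_D$. Expanding $H_{c/d}(m,n) = \tfrac{d}{c}\sum_{t \in (\Z/(c/d)\Z)^\ast} e\big(\tfrac{n t - m t^{-1}}{c/d}\big)$ turns the asserted equality into an additive statement comparing the sum over $\mathcal{O}_D/c\mathcal{O}_D$ on the left with a weighted sum of Kloosterman sums on residue rings $\Z/(c/d)\Z$ on the right. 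I would then reduce to the case of a prime power $c = p^k$: both sides are twisted multiplicative in $c$, which on the left follows from the CRT isomorphism $\mathcal{O}_D/c\mathcal{O}_D \cong \prod_{p \mid c} \mathcal{O}_D/p^{v_p(c)}\mathcal{O}_D$ (under which the norm congruence, the character $e(\tr(\nu b)/c)$ and both Kronecker symbols factor, using $D_{c_1 c_2} = D_{c_1} D_{c_2}$ for $\gcd(c_1,c_2)=1$ since the prime supports are disjoint), and on the right follows from the standard twisted multiplicativity of the sums $H_q$ together with the factorization of the divisor condition $d \mid \gcd(\mu,c)$. This leaves the prime-power identity $\tfrac{1}{p^k}\widetilde{G}_{p^k}(|D|m,\nu) = \sum_{0 \le j \le k,\ p^j \mid \mu}\left(\frac{D}{p}\right)^{j} H_{p^{k-j}}\big(m,\,|\mu|^2/p^{2j}\big)$.

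For a fixed prime $p$ I would parametrize $b \in \mathcal{O}_D/p^k\mathcal{O}_D$ according to the splitting type of $p$, that is, according to $\left(\frac{D}{p}\right) \in \{1,-1,0\}$. In the split and inert cases one has $D_c = 1$, so the inner Kronecker symbol is trivial and only the outer factor $\left(\frac{D}{p^k}\right) = \left(\frac{D}{p}\right)^k$ remains; writing $\mathcal{O}_D \otimes \Z_p$ in local coordinates (a product $\Z_p \times \Z_p$ when $p$ splits, a Galois ring when $p$ is inert) expresses $|b|^2$ as a product, respectively a local norm, and then solving the congruence $|b|^2 \equiv -|D|m$ and substituting a single unit variable produces exactly the Kloosterman sums $H_{p^{k-j}}$ on the right. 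The terms with $j \ge 1$ occur precisely when $p \mid \mu$, arising from the $p$-divisibility strata of the $b$-sum that lower the effective modulus from $p^k$ to $p^{k-j}$ and replace $|\mu|^2$ by $|\mu|^2/p^{2j}$. I expect the bookkeeping of which strata survive, and the cancellations among the non-unit strata when $p \mid m$, to be the routine but lengthy part of the argument.

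The main obstacle is the ramified case $p \mid D$. Here $p^\ast \mid D_c$, so the inner symbol $\left(\frac{D_c}{(|D|m+|b|^2)/p^k}\right)$ is a genuinely nontrivial quadratic character of the represented integer, and it is exactly this factor that has to be converted into the sign $\left(\frac{D}{p}\right)^{j}$ on the right-hand side. The plan is to evaluate the resulting quadratic twisted exponential sum over $\mathcal{O}_D \otimes \Z_p$ by a Gauss-sum computation: expressing the genus character through a quadratic Gauss sum linearizes it into an additive character, after which the $b$-sum collapses, as in the classical evaluation of Sali\'e sums, into the Kloosterman sum with the predicted sign. Carrying out this linearization in a way compatible with the ramification of $p$ --- including the subtleties at $p = 2$, where $2^\ast \in \{-4,\pm 8\}$ --- is where the real work lies, and this is the step I would write out in full detail.
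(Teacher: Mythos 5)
The paper gives no written-out proof of this lemma---it only refers to the Proposition in Section~4 of \cite{zagierdoinaganuma} and to Lemma~4.3 of \cite{bruinieryang} and leaves the details to the reader---and your plan (writing $\nu=\mu/\sqrt{D}$, reducing by twisted multiplicativity and CRT to prime powers $c=p^k$, then evaluating the local norm-form exponential sum according to the splitting type of $p$, with a quadratic Gauss-sum/Sali\'e-type linearization of the genus character at the ramified primes) is exactly the route those references take, so your approach is the intended one and none of your steps should fail. Be aware, though, that what you have written is still an outline: the ramified case, including the subtleties at $p=2$, which you correctly single out as the crux, is precisely the part that must actually be carried out (along with the bookkeeping that the inner Kronecker symbol genuinely factors under CRT) before this constitutes a proof.
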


\begin{proof}
	This can be proved similarly as the Proposition in \cite[Section 4]{zagierdoinaganuma}. We also refer to \cite[Lemma~4.3]{bruinieryang} for the proof in the case that $D > 1$ is a positive odd prime discriminant, in which case one replaces $|b|^2$ with the norm $N(b)=bb'$ in $\Q(\sqrt{D})$. We leave the details to the reader. 
\end{proof}

%begin{remark}
%	Lemma~\ref{lemma exponential sums} also holds for positive fundamental discriminants $D > 1$ if we replace $|b|^2$ with the norm $N(b)$ of $\Q(\sqrt{D})$ in \eqref{eq exponential sum}.
%\end{remark}

%\begin{remark}
%	As an amusing side remark, we see that $\tr_{m,D}\left(E_\infty(\,\cdot\,,s)\right)|_{s=1}$ is essentially the $m$-th coefficient of the weight $2$ Eisenstein series for $\SL_2(\Z)$.
%\end{remark}

\section{Twisted traces of Niebur Poincar\'e series - Proof of Theorem~\ref{theorem trace niebur}}\label{section proofs}

Theorem~\ref{theorem trace niebur} follows from the following evaluation of the twisted traces of the Niebur Poincar\'e series $F_{\nu}(P,s)$ on $\H^3$, by plugging in $s = 1$ and using \eqref{eq cn and jn}.

\begin{proposition}\label{proposition traces Niebur with s}
For $\nu \in \mathfrak{d}_D^{-1}$ with $\nu \neq 0$ we have
\[
\tr_{m,D}(F_{\nu}(\, \cdot \, , s)) = m\sum_{d \mid \nu}\left( \frac{D}{d}\right)d \, c_{|D||\nu|^2/d^2}(m,\tfrac{s+1}{2}), 
\]
where $c_n(m,s)$ are the coefficients of the Niebur Poincar\'e series $F_n(\tau,s)$ on $\PSL_2(\Z)$ from Proposition~\ref{proposition fourier expansion niebur}.
\end{proposition}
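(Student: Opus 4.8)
The plan is to \emph{unfold} the twisted trace against the Poincar\'e series defining $F_\nu$, collapse it to a single lattice sum over $\Gamma_\infty\backslash L^+_{|D|m}$, and then match that sum term by term with the Fourier coefficients $c_n(m,\frac{s+1}{2})$ by means of Lemma~\ref{lemma exponential sums} and Proposition~\ref{proposition fourier expansion niebur}. Throughout I would work in a range of large $\Re(s)$, where the defining series of $F_\nu(P,s)$ converges absolutely and all interchanges of summation are legitimate; the identity on $\Re(s)>1/2$ then follows by analytic continuation, both sides being holomorphic there.

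First I would insert the series for $F_\nu(P_X,s)$ into the definition $\tr_{m,D}(F_\nu(\cdot,s))=\sum_{X\in\Gamma\backslash L^+_{|D|m}}\frac{\chi_D(X)}{|\Gamma_X|}F_\nu(P_X,s)$. Using $\gamma P_X=P_{\gamma X}$ together with the $\Gamma$-invariance of $\chi_D$, the double sum over $\Gamma\backslash L^+_{|D|m}$ (weighted by $1/|\Gamma_X|$) and over $\Gamma_\infty\backslash\Gamma$ collapses to a single sum over $\Gamma_\infty\backslash L^+_{|D|m}$. The stabilizer bookkeeping is the one delicate point: the standard orbit-counting argument attaches the weight $1/|(\Gamma_\infty)_Y|$ to each class $Y$, and since $\Gamma_\infty$ acts on $X=\left(\begin{smallmatrix}a & b\\ \overline b & c\end{smallmatrix}\right)$ by $b\mapsto b+\beta c$ with $c\geq 1$, the stabilizer $(\Gamma_\infty)_Y$ is trivial in $\PSL_2$. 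This yields
\[
\tr_{m,D}(F_\nu(\cdot,s))=2\pi|\nu|\sum_{Y\in\Gamma_\infty\backslash L^+_{|D|m}}\chi_D(Y)\,r(P_Y)\,I_s\!\left(4\pi|\nu|r(P_Y)\right)e(\tr(\nu z(P_Y))).
\]

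Next I would parametrize $\Gamma_\infty\backslash L^+_{|D|m}$ by the $\Gamma_\infty$-invariant entry $c\in\N$ together with $b\in\mathcal O_D/c\mathcal O_D$ satisfying $|b|^2\equiv -|D|m\pmod c$, the remaining entry being $a=(|D|m+|b|^2)/c$. For such $Y$ one has $z(P_Y)=b/c$ and $r(P_Y)=\sqrt{|D|m}/c$, so $r(P_Y)$ supplies exactly the factor $1/c$. The key arithmetic input here is the identity $\chi_D(Y)=\left(\frac{D/D_c}{c}\right)\left(\frac{D_c}{a}\right)$, expressing the twisting function as the Jacobi-symbol product occurring in $\widetilde G_c(|D|m,\nu)$; this is the genus-character computation in the style of Bruinier--Yang. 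With it, the $b$-sum at fixed $c$ is precisely $\frac1c\widetilde G_c(|D|m,\nu)$, and Lemma~\ref{lemma exponential sums} rewrites it as $\sum_{d\mid\nu,\,d\mid c}\left(\frac{D}{d}\right)H_{c/d}(m,|D||\nu|^2/d^2)$. Interchanging the $c$- and $d$-summations and substituting $c=dc'$, the inner series $\sum_{c'}H_{c'}(m,|D||\nu|^2/d^2)\,I_s\!\left(4\pi|\nu|\sqrt{|D|m}/(dc')\right)$ is exactly the Kloosterman--Bessel series for $c_{|D||\nu|^2/d^2}(m,\frac{s+1}{2})$ in Proposition~\ref{proposition fourier expansion niebur}, once one notes $2\cdot\frac{s+1}{2}-1=s$ and $\sqrt{m\cdot|D||\nu|^2/d^2}=|\nu|\sqrt{|D|m}/d$. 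Since the normalizing factor of that coefficient is $\frac{d\sqrt m}{2\pi|\nu|\sqrt{|D|}}$, the prefactor $2\pi|\nu|\sqrt{|D|m}$ simplifies against it to $dm$, giving the claimed formula.

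I expect the main obstacle to be the unfolding step with its stabilizer count, and—one level deeper—the verification that $\chi_D(Y)$ coincides with the Jacobi-symbol product in $\widetilde G_c$; once these are in place everything downstream is a mechanical application of the two quoted results and a routine chase of constants.
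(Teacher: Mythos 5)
Your proposal is correct and follows essentially the same route as the paper's proof: unfold to a sum over $\Gamma_\infty\backslash L^+_{|D|m}$, parametrize by $c$ and $b\bmod c\mathcal{O}_D$, identify the $b$-sum with $\frac{1}{c}\widetilde G_c(|D|m,\nu)$ via the genus-character identity $\chi_D(X)=\left(\frac{D/D_c}{c}\right)\left(\frac{D_c}{a}\right)$, apply Lemma~\ref{lemma exponential sums}, and match the resulting Kloosterman--Bessel series with $c_{|D||\nu|^2/d^2}(m,\frac{s+1}{2})$; your constant chase yielding the factor $dm$ is also correct. The only additions are your (correct) remarks on the triviality of the $\Gamma_\infty$-stabilizers and on justifying the manipulations by analytic continuation, which the paper leaves implicit.
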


\begin{proof}
	We first write
	\begin{align*}
	\tr_{m,D}(F_{\nu}(\, \cdot \, , s)) &= 2\pi |\nu|\sum_{X \in \Gamma \backslash L_{|D|m}^+}\frac{\chi_D(X)}{|\Gamma_X|}\sum_{\gamma \in \Gamma_\infty \backslash \Gamma}r(\gamma P_X)I_s(4\pi |\nu|r(\gamma P_X))e(\tr(\nu z(\gamma P_X))) \\
	&= 2\pi |\nu|\sum_{X \in \Gamma_\infty \backslash L_{|D|m}^+}\chi_D(X)r(P_X)I_s(4\pi |\nu|r(P_X))e(\tr(\nu z(P_X))).
	\end{align*}
	For~$X=\left(\begin{smallmatrix}a & b \\ \overline{b} & c \end{smallmatrix}\right)\in L^+_{m|D|}$ we have $r(P_X) = \frac{\sqrt{|D|m}}{c}$ and $z(P_X) = \frac{b}{c}$. Moreover, note that $\left(\begin{smallmatrix}1 & \beta \\ 0 & 1 \end{smallmatrix} \right) \in \Gamma_\infty$ changes $b$ to $b+a\beta$, but does not change $c$. Hence, a system of representatives for $\Gamma_\infty \backslash L_{|D|m}^+$ is given by the matrices $\left(\begin{smallmatrix}a & b \\ \overline{b} & c \end{smallmatrix}\right)$ where $c$ runs through all positive integers, $b$ runs through $\mathcal{O}_D/c\mathcal{O}_D$ with $|b|^2 \equiv - |D|m \pmod{c}$, and $a$ is determined by $a = \frac{|D|m + |b|^2}{c}$. Then we have
	\[
	\chi_D\left(\begin{pmatrix}a & b \\ \overline{b} & c \end{pmatrix} \right) = \left(\frac{D/D_c}{c} \right)\left( \frac{D_c}{a}\right) = \left(\frac{D/D_c}{c} \right)\left( \frac{D_c}{\frac{|D|m+|b|^2}{c}}\right).
	\]
	Hence we get 
	\begin{align*}
	& \tr_{m,D}(F_{\nu}(\, \cdot \, , s)) \\
	&\quad= 2\pi |\nu|\sum_{c > 0}\sum_{\substack{b \in \mathcal{O}_D / c\mathcal{O}_D \\ |b|^2 \equiv - |D|m (c)}}\left( \frac{D/D_c}{c}\right)\left(\frac{D_c}{\frac{|D|m+|b|^2}{c}} \right)\frac{\sqrt{|D|m}}{c}I_s\left(4\pi |\nu|\frac{\sqrt{|D|m}}{c}\right)e(\tr(\nu b)/c) \\
	&\quad = 2\pi |\nu| \sqrt{|D|m}\sum_{c > 0}\frac{1}{c}\widetilde{G}_c(|D|m,\nu)I_s\left(4\pi |\nu|\frac{\sqrt{|D|m}}{c}\right),
	\end{align*}
	with the exponential sum $\widetilde{G}_c(|D|m,\nu)$ defined in \eqref{eq exponential sum}. Using Lemma~\ref{lemma exponential sums} we can further compute
	\begin{align*}
	\tr_{m,D}(F_{\nu}(\, \cdot \, , s))&= 2\pi |\nu|\sqrt{|D|m}\sum_{c > 0}\sum_{\substack{d \mid \nu \\ d \mid c}}\left( \frac{D}{d}\right)H_{c/d}(m,|D||\nu|^2/d^2)I_s\left(4\pi |\nu|\frac{\sqrt{|D|m}}{c}\right) \\
	& = 2\pi |\nu|\sqrt{|D|m}\sum_{\substack{d \mid \nu}}\left( \frac{D}{d}\right)\sum_{c > 0}H_{c}(m,|D||\nu|^2/d^2)I_s\left(4\pi |\nu|\frac{\sqrt{|D|m}}{cd}\right).
	\end{align*}
	Comparing the series over $c > 0$ to the coefficients of the Niebur Poincar\'e series from Proposition~\ref{proposition fourier expansion niebur}, we obtain
	\begin{align*}
	\tr_{m,D}(F_{\nu}(\, \cdot \, , s)) &= m\sum_{\substack{d \mid \nu}}\left( \frac{D}{d}\right) d \, c_{|D||\nu|^2/d^2}(m,\tfrac{s+1}{2}).
	\end{align*}
	This gives the stated formula.
\end{proof}

%Here we also used that $\tr_{m,D}(F_{-\mu}(s))= \tr_{m,D}(F_{\mu}(s))$, which follows from the definition of the Niebur Poincar\'e series and the fact that the trace contains $[a,b,c]$ and $[a,-b,c]$.

\section{Twisted traces of Eisenstein series - Proof of Theorem~\ref{theorem trace eisenstein}}\label{section proofs 2}

Replicating the first steps from the proof of Proposition~\ref{proposition traces Niebur with s} in Section~\ref{section proofs} we arrive at
     %\begin{align*}
%	& \tr_{m,D}(E_0(\, \cdot \, , s)) \\
%	&\quad= \sum_{c > 0}\sum_{\substack{b \in \mathcal{O}_D / c\mathcal{O}_D \\ |b|^2 \equiv - |D|m (c)}}\left( \frac{D/D_c}{c}\right)\left(\frac{D_c}{\frac{|D|m+|b|^2}{c}} \right) \left(\frac{\sqrt{|D|m}}{c}\right)^{1+s}.
	%\end{align*}

 %\begin{proof}[Proof of Theorem~\ref{theorem trace eisenstein}]
%	We first compute
%	\begin{align*}
%	\tr_{m,D}(E_{0}(\, \cdot \,, s)) &= \sum_{X \in \Gamma \backslash L_{|D|m}^+}\frac{\chi_D(X)}{|\Gamma_X|}\sum_{\gamma \in \Gamma_\infty \backslash \Gamma}r(\gamma P_X)^{s+1} \\
%	&= \sum_{X \in \Gamma_\infty \backslash L_{|D|m}^+}\chi_D(X)r(P_X)^{s+1}.
%	\end{align*}
%	We have $r(P_X) = \frac{\sqrt{|D|m}}{c}$. Moreover, note that $\left(\begin{smallmatrix}1 & \beta \\ 0 & 1 \end{smallmatrix} \right) \in \Gamma_\infty$ changes $b$ to $b+a\beta$, but does not change $c$. Hence, a system of representatives for $\Gamma_\infty \backslash L_{|D|m}^+$ is given by the matrices $\left(\begin{smallmatrix}a & b \\ \overline{b} & c \end{smallmatrix}\right)$ where $c$ runs through all positive integers, $b$ runs through $\mathcal{O}_F/c\mathcal{O}_F$ with $|b|^2 \equiv - |D|m \pmod{c}$, and $a$ is determined by $a = \frac{|D|m + |b|^2}{c}$. Then we have
%	\[
%	\chi_D\left(\begin{pmatrix}a & b \\ \overline{b} & c \end{pmatrix} \right) = \left(\frac{D/D_c}{c} \right)\left( \frac{D_c}{a}\right) = \left(\frac{D/D_c}{c} \right)\left( \frac{D_c}{\frac{|D|m+|b|^2}{c}}\right),
%	\]
%	so we arrive at 
\begin{align*}
	\tr_{m,D}(E_{0}(\, \cdot \,, s)) &= \sum_{c > 0}\sum_{\substack{b \in \mathcal{O}_F/c\mathcal{O}_F \\ |b|^2 \equiv -|D|m (c)}}\left(\frac{D/D_c}{c} \right)\left( \frac{D_c}{\frac{|D|m+|b|^2}{c}}\right)\left(\frac{\sqrt{|D|m}}{c} \right)^{s+1}\\
	&= \left(\sqrt{|D|m}\right)^{s+1}\sum_{c > 0}c^{-s-1}\widetilde{G}_c(|D|m,0).
	\end{align*}
	Using Lemma~\ref{lemma exponential sums} we get
	\begin{align*}
	\tr_{m,D}(E_{0}(\, \cdot \,, s)) &= \left(\sqrt{|D|m}\right)^{s+1}\sum_{c > 0}c^{-s}\sum_{r \mid c}\left( \frac{D}{r}\right)H_{c/r}(m,0) \\
	&= \left(\sqrt{|D|m}\right)^{s+1}\sum_{t > 0}\sum_{r > 0}(tr)^{-s}\left( \frac{D}{r}\right)H_{t}(m,0) \\
	&= \left(\sqrt{|D|m}\right)^{s+1}L_D(s)\sum_{t > 0}t^{-s}H_{t}(m,0).
\end{align*}
	Comparing the series on the right-hand side to the coefficient $c_m(0,s)$ of the Niebur Poincar\'e series $F_m(\tau,s)$ from Proposition~\ref{proposition fourier expansion niebur}, and using \eqref{eq constant term F_n}, we get
	\begin{align*}
	\tr_{m,D}(E_{0}(\, \cdot \,, s)) &=\left(\sqrt{|D|m}\right)^{s+1}L_D(s)\frac{s\Gamma(\tfrac{s+1}{2})}{4\pi^{\frac{s+3}{2}}m^{\frac{s+1}{2}}}c_{m}(0,\tfrac{s+1}{2}) \\
	&= \left(\sqrt{|D|m}\right)^{s+1}L_D(s)\frac{s\Gamma(\tfrac{s+1}{2})}{4\pi^{\frac{s+3}{2}}m^{\frac{s+1}{2}}}\frac{4\pi}{s}\frac{m^{\frac{1-s}{2}}\sigma_{s}(m)}{\pi^{-\frac{s+1}{2}}\Gamma(\tfrac{s+1}{2})\zeta(s+1)} \\
	&= \frac{|D|^{\frac{s+1}{2}}L_D(s)}{\zeta(s+1)}m^{\frac{1-s}{2}}\sigma_s(m).
	\end{align*}
	This proves  the stated formula. At $s = 1$  using Dirichlet's class number formula, $L_D(1) = \frac{2\pi h(D)}{\sqrt{|D|}w(D)}$ and $\zeta(2) = \frac{\pi^2}{6}$ we complete the proof of Theorem~\ref{theorem trace eisenstein}.

\section{Generating function over~$\nu$ - Proof of Theorem~\ref{thm generating function over nu}}\label{sec: generating functions over nu}

Corollary~\ref{cor: generating series over m} gives the modularity of the generating function for the twisted traces~$\tr_{m,D}(J_\nu)$ when summed over~$m$, for fixed~$\nu\in \mathfrak{d}_D^{-1}$ with~$\nu\neq 0$. Instead in this section we fix~$m$, and summing over $\nu$ we prove Theorem~\ref{thm generating function over nu}, which gives a weight 2 automorphic form~$\mathcal{F}_{m,D}=(\mathcal{F}_{m,D}^{(2)},\mathcal{F}_{m,D}^{(1)},\mathcal{F}_{m,D}^{(0)})^t$ on~$\H^3\setminus T_{m,D}$, whose Fourier coefficients are given in terms of ~$\tr_{m,D}(J_\nu)$. 

We start by recalling the construction of the automorphic Green's function for $\Gamma$.  It is defined by\footnote{Here we follow~\cite{herreroimamogluvonpippichschwagenscheidt} where we used a different normalization than \cite{elstrodt} and \cite{herreroimamogluvonpippichtoth}.}
\begin{equation*}\label{eq Green's function}
G_s(P_1,P_2) = \pi\sum_{\gamma \in \Gamma}\varphi_s\big(\cosh(d(P_1,\gamma P_2))\big),
\end{equation*}
for~$P_1,P_2\in \H^3, P_1\not \equiv P_2$ (mod~$\Gamma$) and~$s\in \C$ with $\Re(s)>1$. Here
\begin{equation*}\label{eq:varphi_s}
\varphi_s(t) = \left(t+\sqrt{t^2-1}\right)^{-s} (t^2-1)^{-1/2} \qquad \text{for }t>1.
\end{equation*}
The Green's function is~$\Gamma$-invariant in each variable and symmetric in~$P_1,P_2$. It defines a smooth function on~$(\Gamma \backslash \H^3)\times (\Gamma \backslash \H^3)$ away from the diagonal, with a singularity of the form
$$G_s(P,Q)=\frac{\pi |\Gamma_Q|}{d(P,Q)}+O_Q(1) \text{ as }P\to Q,$$
where~$d(\cdot,\cdot)$ denotes the hyperbolic distance. Moreover, it satisfies
\[
(\Delta_{P_1} - (1-s^2)) G_s(P_1,P_2) = 0.
\]
As a function of~$s$ it has  meromorphic continuation to~$\C$ with~$s=1$ a simple pole with constant residue (independent of~$P_1$ and~$P_2$).

The Fourier expansion of the Green's function in the variable~$P_2=z_2+r_2j$, for~$r_2>r(\gamma P_1)$ for all~$\gamma \in \Gamma$, is given by
$$G_s(P_1,P_2)=\frac{4\pi^2}{\sqrt{|D|}}\Bigg(\frac{r_2^{1-s}}{s}E_0(P_1,s)+\frac{1}{\pi}\sum_{\substack{\nu \in \mathfrak{d}^{-1}_D \\ \nu\neq 0}}F_{-\nu}(P_1,s)|\nu|^{-1}r_2K_s(4\pi |\nu|r_2)e(\tr(\nu z_2))\Bigg).$$
This Fourier expansion has analytic continuation to~$\mathrm{Re}(s)>\frac{1}{2},s\neq 1$. See~\cite{elstrodt} and ~\cite{herreroimamogluvonpippichtoth} for proofs.

It follows from the properties of the Green's function described above that the function
$$\mathcal{L}_{m,D}(P)=\lim_{s\to 1}\tr_{m,D}\left(\frac{\sqrt{|D|}}{4\pi}G_s(\cdot,P)-\frac{\pi r(P)^{1-s}}{s} E_0(\cdot,s)\right),$$
for~$P\in \H^3\setminus T_{m,D}$, is~$\Gamma$-invariant, harmonic, and has Fourier expansion
\begin{equation}\label{eq L_{m,D}}
    \mathcal{L}_{m,D}(z+rj)=%\frac{4\pi}{\sqrt{|D|}}
\sum_{\substack{\nu \in \mathfrak{d}^{-1}_D \\ \nu\neq 0}}\tr_{m,D}(J_{\nu})|\nu|^{-1}rK_1(4\pi |\nu|r)e(\tr(\nu z)),
\end{equation}
valid when~$r>r(Q)$ for all~$Q\in T_{m,D}$ (in particular, when~$r>\sqrt{m|D|}$). Finally, by~\cite[Proposition~1.2]{friedberg}, the function~$\mathfrak{D}_0\mathcal{L}_{m,D}$, where~$\mathfrak{D}_0$ denotes the raising operator
$$\mathfrak{D}_0=\frac{1}{2\pi i}(-\partial_z,\partial_r,\partial_{\overline{z}})^t,$$
is a smooth automorphic form of weight 2 for~$\Gamma$ on~$\H^3\setminus T_{m,D}$. Finally, applying~$\mathfrak{D}_0$ to~\eqref{eq L_{m,D}}, using 
the identity
$$\frac{\partial}{\partial y} K_1(y)=-K_0-\frac{1}{y}K_1(y)$$
(see, e.g., \cite[Formula~8.472(1)]{GRtableofintegrals}) and comparing the result with~\eqref{eq F_i}, %The Fourier expansion~\eqref{eq F_i} follows a direct computation
we conclude that~$\mathcal{F}_{m,D}=\mathfrak{D}_0\mathcal{L}_{m,D}$. This proves Theorem~\ref{thm generating function over nu}.


\begin{thebibliography}{99}
	
%	\bibitem{borcherds} R.E.~Borcherds, {\it Automorphic forms with singularities on {G}rassmannians}, Invent. Math. {\bf 132} (1998), 491--562.
%	\bibitem{bruinier} J.H.~Bruinier, {\it Borcherds products on {O}(2, {$l$}) and {C}hern classes of {H}eegner divisors}, Lecture Notes in Mathematics {\bf 1780}, Springer-Verlag, Berlin (2002).
%	\bibitem{bruinierehlenyang} J.H.~Bruinier, S.~Ehlen, and T.~Yang, {\it CM values of higher automorphic Green's functions on orthogonal groups}, Invent. Math. {\bf 225} (2021), 693--785.
%	\bibitem{bruinierfunke} J.H.~Bruinier and J.~Funke, {\it On two geometric theta lifts}, Duke Math. J. {\bf 125}, no. 1 (2004), 45--90.
%	\bibitem{bruinierkuehn} J.H.~Bruinier and U.~K\"uhn, {\it Integrals of automorphic Green's’s functions associated to Heegner divisors}, Int. Math. Res. Not. IMRN {\bf 2003}, no. 31 (2003), 1687--1729.
%	\bibitem{bruinierkuss}  J.H.~Bruinier and M.~Kuss, {\it Eisenstein series attached to lattices and modular forms on orthogonal groups}, Manuscripta Math. {\bf 106}, no. 4 (2001), 443--459.
%	\bibitem{bruinierliyang} J.H.~Bruinier, Y.~Li, and T.~Yang, {\it Deformed theta integral and a conjecture of Gross-Zagier}, arXiv:2204.10604 (2022).
%	\bibitem{bruinierono} J.H.~Bruinier and K. Ono, {\it Heegner divisors, $L$-functions and harmonic weak Maass forms}, Ann. of Math. (2) {\bf 172} (2010), 2135--2181.
%	\bibitem{bruinierschwagenscheidt} J.H.~Bruinier and M.~Schwagenscheidt, {\it Algebraic formulas for the coefficients of mock theta functions and Weyl vectors of Borcherds products}, J. Algebra {\bf 478} (2017), 38--57.
	\bibitem{bruinierfunke} J.H.~Bruinier and J.~Funke, {\it Traces of CM values of modular functions}, J. Reine Angew. Math. {\bf 594} (2006), 1--33.
	\bibitem{bruinieryang} J.H.~Bruinier and T.~Yang, {\it Twisted Borcherds products on Hilbert modular surfaces and their CM values}, Amer. J. Math. {\bf 129} (2007), 807--841.
%	\bibitem{cowankatzwhite} R.~Cowan, D.~Katz, and L.~White, {\it A new generating function for calculating the Igusa local zeta function}, Adv. Math. {\bf 304} (2017), 355--420.
%	\bibitem{dukeli} W.~Duke and Y.~Li, {\it Harmonic Maass forms of weight 1}, Duke Math. J. {\bf 164}, no. 1 (2015), 39--113.
	%\bibitem{duke} W.~Duke, {\it Modular functions and the uniform distribution problems of CM points}, Math. Ann. {\bf 334} (2) (2006), 241--252.

 \bibitem{dukeimamoglutoth} W.~Duke, \"O.~Imamo\=glu, and \'A.~T\'oth, Á., {\it Cycle integrals of the j-function and mock modular forms}, 
Ann. of Math. (2) {\bf 173}, No. 2 (2011), 947--981.
 
 \bibitem{ehlen} S.~Ehlen, {\it Twisted Borcherds products on Hilbert modular surfaces and the regularized theta lift}, Int. J. Number Theory {\bf 06}, No. 07 (2010), 1473--1489.

    

 %   \bibitem{friedberg-1983} S.~Friedberg, {\it On the imaginary quadratic Doi-Naganuma lifting of modular forms of arbitrary level}, Nagoya Math. J. {\bf 92} (1983), 1--20.
  %\bibitem{elstrodtpaper} J.~Elstrodt, F.~Grunewald, and J.~Mennicke, {\it Zeta-Functions of Binary Hermitian Forms and Special Values of Eisenstein Series on Three-Dimensional Hyperbolic Space}, Math. Ann. {\bf 277} (1987), 655--708. 
	\bibitem{elstrodt} J.~Elstrodt, F.~Grunewald, and J.~Mennicke, {\it Groups Acting on Hyperbolic Space - Harmonic Analysis and Number Theory}, Springer Monographs in Mathematics, Springer-Verlag Berlin Heidelberg (1998).
%	\bibitem{ehlen} S.~Ehlen, {\it CM values of regularized theta lifts and harmonic weak Maa forms of weight 1}, Duke Math. J. {\bf 166}, no. 13 (2017), 2447–2519.

    \bibitem{friedberg} S.~Friedberg, {\it Differential operators and theta series}, Trans. Amer. Math. Soc. {\bf 287}, No. 2 (1985), 569--589.

    \bibitem{GRtableofintegrals} I.~S.~Gradshteyn, and I.~M.~Ryzhik, {\it Table of integrals, series, and products}, Eighth edition, Translated from the Russian, Translation edited and with a preface by Daniel Zwillinger and Victor Moll, Elsevier/Academic Press, Amsterdam, 2015. 

%	\bibitem{grosskohnenzagier} B.~Gross, W.~Kohnen, and D.~Zagier, {\it Heegner points and derivatives of $L$-series. II}, Math. Ann. {\bf 278} (1987), 497--562.
%	\bibitem{grosszagier} B.~Gross and D.~Zagier, {\it Heegner points and derivatives of $L$-series}, Invent. Math. {\bf 84} (1986), 225-320.

	\bibitem{herreroimamogluvonpippichschwagenscheidt} S.~Herrero, \"O.~Imamoglu, A.-M.~von Pippich, and M.~Schwagenscheidt, {\it Special values of Green's functions on hyperbolic $3$-space}, arXiv:2405.01219 (2024).
 \bibitem{herreroimamogluvonpippichtoth} S.~Herrero, \"O.~Imamoglu, A.-M.~von Pippich, and \'A.~T\'oth, {\it A Jensen--Rohrlich type formula for the hyperbolic 3-space}, 
Trans. Amer. Math. Soc. {\bf 371}, no. 9 (2019), 6421--6446.

\bibitem{iwaniec} H.~Iwaniec, {\it On Waldspurger's theorem}, Acta Arith. {\bf 49}, no. 2 (1987), 205--212.

\bibitem{kohnen} W.~Kohnen, {\it Fourier coefficients of modular forms of half-integral weight}, Math. Ann. {\bf 271} (1985), 237--268. %https://doi.org/10.1007/BF01455989 
 
 \bibitem{kumar} B.~Kumar, {\it On Fourier coefficients of Niebur Poincar\'e series}, J. Number Theory {\bf 204} (2019), 579--598.
%	\bibitem{kudla} S.~Kudla, {\it Integrals of Borcherds forms}, Compos. Math. {\bf 137}, no. 3 (2003), 293--349.
%	\bibitem{kudlaextensionsiegelweil} S.~Kudla, {\it Some extensions of the Siegel-Weil formula}, Eisenstein series and applications, 205–237, Progr. Math. {\bf 258}, Birkh\"auser, Boston (2008).
%	\bibitem{li1} Y.~Li, {\it Average CM-values of Higher Green's’s Function and Factorization}, American journal of math {\bf 144}, no. 5 (2022), 1241-1298.
%	\bibitem{li2} Y.~Li, {\it Algebraicity of higher Green's function at a CM point}, Invent. Math. (2023)
%	\bibitem{mellit} A.~Mellit, {\it Higher Green's’s functions for modular forms}, arXiv:0804.3184 (2008).
%	\bibitem{matthes1} R.~Matthes, {\it On some Poincar\'e series on hyperbolic space}, Forum Math. {\bf 11} (1999), no. 4, 483--502.
	\bibitem{matthes} R.~Matthes, {\it Regularized theta lifts and Niebur-type Poincar\'e series on $n$-dimensional hyperbolic space}, J. Number Theory {\bf 133} (2013), 20--47.
	\bibitem{mizuno} Y.~Mizuno, {On Fourier coefficients of Eisenstein series and Niebur Poincar\'e series of integral weight}, J. Number Theory {\bf 128} (2008), 898--909.
	\bibitem{niebur} D.~Niebur, {\it A class of nonanalytic automorphic functions}, Nagoya Math. J. {\bf 52} (1973), 133--145.
%	\bibitem{rallis} S.~Rallis, {\it $L$-Functions and the Oscillator Representation}, Lecture notes in mathematics {\bf 1245}, Springer-Verlag, New York (1987).
%	\bibitem{rhoadeswaldherr} R.C.~Rhoades and M.~Waldherr, {\it A Maass Lifting of Theta cubed and Class Numbers of Real and Imaginary Quadratic Fields}, Math. Res. Lett. {\bf 18}, no. 5 (2011), 1001--1012.
%	\bibitem{shimura} G.~Shimura, {\it Integer-Valued Quadratic Forms and Quadratic Diophantine Equations}, Doc. Math. {\bf 11} (2006), 333--367.
%	\bibitem{schofer} J.~Schofer, {\it Borcherds forms and generalizations of singular moduli}, J. Reine Angew. Math. {\bf 629} (2009), 1--36. 
%	\bibitem{viazovska1} M.~Viazovska, {\it CM Values of Higher Green's’s Functions}, arXiv:1110.4654 (2011).
%	\bibitem{viazovska2} M.~Viazovska, {\it Petersson inner products of weight-one modular forms}, J. Reine Angew. Math. {\bf 749} (2019), 133--159.
%	\bibitem{williamseisenstein} B.~Williams, {\it Vector-valued Eisenstein series of small weight}, Int. J. Number Theory {\bf 15} (2019), no. 2, 265--287.
%	\bibitem{williamspoincare} B.~Williams, {\it Poincaré square series for the Weil representation}, Ramanujan J. {\bf 47} (2018), no. 3, 605--650.
%	\bibitem{williamsweilrep} B.~Williams, {\it weilrep} sageMath program, https://github.com/btw-47/weilrep.
	\bibitem{zagiertraces} D.~Zagier, {\it Traces of singular moduli}, in: Motives, Polylogarithms and Hodge Theory (Part I), Eds.: F. Bogomolov and L. Katzarkov, International Press, Somerville (2002).
	\bibitem{zagierdoinaganuma} D.~Zagier, {\it Modular forms associated to real quadratic fields}, Invent. Math. {\bf 30} (1975), 1--46.
\end{thebibliography}
\end{document}